\documentclass{amsart}

\pdfoutput=1

\usepackage{amsmath}
\usepackage{amssymb}
\usepackage{bbm}
\usepackage{mathrsfs}
\usepackage{hyperref}

\usepackage[T1]{fontenc}
\usepackage[utf8]{inputenc}

\usepackage{graphicx}

\numberwithin{equation}{section}
\numberwithin{figure}{section}

\setlength{\parindent}{0pt}
\setlength{\marginparwidth}{4cm}
\frenchspacing
\sloppy
\textheight206mm
\textwidth150mm
\hoffset-13mm
\newcommand{\R}{\mathbb{R}}

\newcommand{\e}{\operatorname{e}}
\newcommand{\dd}{\,{\mathrm d}}
\newcommand{\db}{{\mathrm d}}

\newcommand{\im}{\operatorname{i}}

\newcommand{\car}{\Gamma}
\newcommand{\drift}{b}
\newcommand{\SU}{{\rm SU}(2)}
\newcommand{\SL}{{\rm SL}(2,\R)}
\newcommand{\Hs}{\mathbb{H}}
\newcommand{\hess}{\operatorname{Hess}}

\newcommand{\eps}{\varepsilon}
\newcommand{\pt}{\partial}

\newtheorem{lemma}{Lemma}[section]

\newtheorem{propn}[lemma]{Proposition}
\newtheorem{thm}[lemma]{Theorem}

\newtheorem*{defn*}{Definition}
\newtheorem{remark0}[lemma]{Remark}
\newtheorem{eg0}[lemma]{Example}

\newenvironment{remark}{\begin{remark0}\rm}{\hspace*{\fill} $\square$
                        \end{remark0}}

\author[D. Barilari, U. Boscain, D. Cannarsa, K. Habermann]
{Davide Barilari, Ugo Boscain, Daniele Cannarsa, Karen Habermann}

\address{Davide Barilari, Universit{\'e} de Paris, Sorbonne
  Universit{\'e}, CNRS, Institut de Math{\'e}matiques de Jussieu-Paris
  Rive Gauche, F-75013 Paris, France.}
\email{davide.barilari@imj-prg.fr}

\address{Ugo Boscain, CNRS, Laboratoire Jacques-Louis Lions, Sorbonne
  Universit{\'e}, Universit{\'e} de Paris, Inria, F-75005 Paris, France.}
\email{ugo.boscain@upmc.fr}

\address{Daniele Cannarsa, Universit{\'e} de Paris, Sorbonne
  Universit{\'e}, CNRS, Inria, Institut de Math{\'e}matiques de
  Jussieu-Paris Rive Gauche, F-75013 Paris, France.}
\email{daniele.cannarsa@imj-prg.fr}

\address{Karen Habermann, Laboratoire Jacques-Louis Lions, Sorbonne
  Universit{\'e}, Universit{\'e} de Paris, CNRS, Inria, F-75005 Paris,
  France.}
\email{karen.habermann@upmc.fr}


\title[Stochastic processes on surfaces in 3D contact sub-Riemannian
manifolds]{Stochastic processes on surfaces in three-dimensional
  contact sub-Riemannian manifolds}
\begin{document}
\begin{abstract}
  We are concerned with stochastic processes on surfaces in
  three-dimensional contact sub-Riemannian manifolds. Employing the
  Riemannian approximations to the sub-Riemannian manifold which make
  use of the Reeb vector field, we obtain a second order partial
  differential operator on the surface arising as the limit of
  Laplace--Beltrami operators. The stochastic process
  associated with the limiting operator moves along the characteristic
  foliation induced on the surface by the contact distribution. 
  We show that for this stochastic process elliptic characteristic points
  are inaccessible, while hyperbolic characteristic points are
  accessible from the separatrices. We illustrate the results with
  examples and we identify canonical surfaces in the Heisenberg group,
  and in ${\rm SU}(2)$ and ${\rm SL}(2,\R)$ equipped
  with the standard sub-Riemannian contact structures
  as model cases for this setting.
  Our techniques further allow us to derive an expression for
  an intrinsic Gaussian curvature of a surface in a
  general three-dimensional contact sub-Riemannian manifold.
\end{abstract}

\maketitle
\thispagestyle{empty}
\section{Introduction}
The study of surfaces in three-dimensional contact manifolds has found
a lot of interest, amongst others, since the so-called oriented
singular foliation on the surface provides an important invariant used
to classify contact structures, see Abbas and
Hofer~\cite[Chapter~3]{abbas}, Geiges~\cite[Chapter~4]{geiges}, and
Giroux~\cite{giroux1,giroux2}. In recent years, there has been an
increased activity in studying surfaces in three-dimensional contact manifolds
whose contact distributions additionally carry a metric.
Balogh~\cite{balogh03}
analyses the Hausdorff dimension of the so-called characteristic
set of a hypersurface in the Heisenberg group. Balogh, Tyson and
Vecchi~\cite{balogh} define an intrinsic Gaussian curvature for
surfaces in the Heisenberg group and an intrinsic signed geodesic
curvature for curves on surfaces to obtain a Gauss--Bonnet theorem in
the Heisenberg group. Veloso~\cite{veloso} extends the results
in~\cite{balogh} to general three-dimensional contact manifolds for
non-characteristic surfaces. Danielli, Garofalo and
Nhieu~\cite{danielli} discuss the local summability of the
sub-Riemannian mean curvature of surfaces in the Heisenberg group.
The contribution of this paper is to introduce a canonical stochastic
process on a given surface in a three-dimensional contact
manifold whose contact distribution is equipped with a metric, to
analyse properties of the induced stochastic process and to identify
model cases for this setting.

Let $(M,D,g)$ be a three-dimensional contact sub-Riemannian manifold,
that is, we consider a three-dimensional manifold $M$ which is equipped with a
sub-Riemannian structure $(D,g)$ that is contact. A sub-Riemannian
structure on a manifold $M$ consists of a bracket generating
distribution $D\subset TM$ and a smooth fibre inner product $g$ defined on
$D$. Such a sub-Riemannian structure is said to be contact if the
distribution $D$ is a contact structure on $M$. Under the
assumption that $D$ is coorientable, the latter means that there exists
a global one-form $\omega$ on $M$ satisfying $\omega\wedge\db\omega\not=0$
and such that $D=\operatorname{ker}\omega$. The one-form $\omega$ is
called a contact form and the pair $(M,D)$ is called a contact
manifold. Throughout, we choose the contact form $\omega$ to be
normalised such that $\db\omega|_D=-\operatorname{vol}_g$ for
$\operatorname{vol}_g$ denoting the Euclidean volume form on $D$
induced by the fibre inner product $g$. Associated with the contact
form $\omega$, we have the Reeb vector field $X_0$ which is the unique
vector field on $M$ satisfying $\db\omega(X_0,\cdot)\equiv 0$ and
$\omega(X_0)\equiv 1$.

Let $S$ be an orientable surface embedded in the contact manifold $(M,D)$. We
call a point $x\in S$ a characteristic point of $S$ if the contact
plane $D_x$ coincides with the tangent space $T_xS$. Note that
characteristic points are also called singular points,
cf.~\cite{abbas} and \cite{geiges}. We denote the set
of all characteristic points of $S$ by $\car(S)$. If $x\in S$ is not a
characteristic point then $D_x$ and $T_xS$ intersect in a
one-dimensional subspace. These subspaces induce a singular
one-dimensional foliation on $S$, that is, an equivalence class of
vector fields which differ by a strictly positive or strictly negative
function. This foliation is
called the characteristic foliation of $S$ induced by the contact
structure $D$. We see that the canonical stochastic process we define
on the surface $S$ moves along the characteristic foliation.
This process does not hit elliptic characteristic
points, whereas a hyperbolic characteristic point is hit subject to an
appropriate choice of the starting point.
In the dynamical systems terminology, an elliptic point
corresponds to a node or a focus, and a hyperbolic point is called
a saddle, see Robinson~\cite{robinson}.

To construct the canonical stochastic process on $S$, we
consider the Riemannian approximations to the sub-Riemannian
manifold $(M,D,g)$ which make use of the Reeb vector field $X_0$.
For $\eps>0$, the Riemannian approximation to $(M,D,g)$ defined uniquely
by requiring $\sqrt{\eps}X_0$ to be unit-length and to be orthogonal
to the distribution
$D$ everywhere induces a Riemannian metric $g_\eps$ on $S$.
This gives rise to the two-dimensional Riemannian manifold $(S,g_\eps)$
and its Laplace--Beltrami operator~$\Delta_\eps$. We show that
the operators $\Delta_\eps$ converge
uniformly on compacts in $S\setminus\car(S)$ to an
operator $\Delta_0$ on $S\setminus\car(S)$, and we study
the stochastic process on $S\setminus\car(S)$ whose generator is
$\frac{1}{2}\Delta_0$.

To simplify the presentation of the paper, we shall assume that the
distribution $D$ is trivialisable, that is, globally generated by a
pair of vector fields, and we choose vector fields $X_1$ and $X_2$
such that $(X_1,X_2)$ is an oriented orthonormal frame for $D$ with
respect to the fibre inner product $g$. By the Cartan formula and due
to $\db\omega|_D=-\operatorname{vol}_g$, we have
\begin{equation*}
  \omega([X_1,X_2])=-\db\omega(X_1,X_2)=1\;.
\end{equation*}
Since $X_0$ is the Reeb vector field, we obtain
\begin{equation*}
  \omega([X_0,X_i])=-\db\omega(X_0,X_i)=0
  \quad\mbox{for }i\in\{1,2\}\;.  
\end{equation*}
It follows that there exist functions
$c_{ij}^1,c_{ij}^2\colon M\to\R$, for $i,j\in\{0,1,2\}$, such that
\begin{align}
  [X_1,X_2]&=c_{12}^1X_1+c_{12}^2X_2+X_0\;,\\
  [X_0,X_1]&=c_{01}^1X_1+c_{01}^2X_2\;,\label{defn:c01}\\
  [X_0,X_2]&=c_{02}^1X_1+c_{02}^2X_2\;\label{defn:c02}.
\end{align}
In particular, the vector fields $X_1$, $X_2$ and $[X_1,X_2]$ on $M$ are
linearly independent everywhere.
The Riemannian approximation to $(M,D,g)$ for $\eps>0$ is
then obtained by requiring $(X_1,X_2,\sqrt{\eps} X_0)$ to be a
global orthonormal frame.
We further suppose that the surface $S$ embedded in $M$ is given by
\begin{equation}\label{defn:Swu}
  S=\{x\in M: u(x)=0\}
  \quad\mbox{for }u\in C^2(M)
  \mbox{ with }\db u\not=0
  \mbox{ on } S\;.
\end{equation}
While this might define a surface consisting of multiple connected
components, we could always restrict our attention to a single
connected component. A point $x\in S$ is a characteristic point
if and only if $(X_1u)(x)=(X_2u)(x)=0$, that is,
\begin{equation}\label{cri:char}
  x\in\car(S)
  \quad\mbox{if and only if}\quad
  \left((X_1u)(x)\right)^2+\left((X_2u)(x)\right)^2=0\;.
\end{equation}
Consequently, the characteristic set $\car(S)$ is a closed subset of $S$.
With $\hess u$ denoting the horizontal Hessian of $u$
defined by
\begin{equation}\label{defn:hess}
  \hess u=
  \begin{pmatrix}
    X_1X_1 u & X_1X_2 u\\
    X_2X_1 u & X_2X_2 u
  \end{pmatrix}\;,
\end{equation}
we can classify the characteristic points of $S$ as follows. 
\begin{defn*}
A characteristic 
point $x\in\car(S)$ is called non-degenerate if
$\det((\hess u)(x))\not=0$,
it is called elliptic if $\det((\hess u)(x))>0$, and it is
called hyperbolic if $\det((\hess u)(x))<0$.
\end{defn*}
With the notations introduced above,
we can explicitly write down the expression of a
unit-length representative of the characteristic foliation
of $S$ induced by the contact structure $D$. Let $\widehat{X}_S$ be the
vector field on $S\setminus\car(S)$ defined by
\begin{equation}\label{defn:E1}
  \widehat{X}_S = \frac{(X_2u)X_1-(X_1u)X_2}{\sqrt{(X_1u)^2+(X_2u)^2}}\;.
\end{equation}
Note that while $\widehat{X}_S$ is expressed in terms of $X_1,X_2$ and
$u$, it only depends on the sub-Riemannian manifold $(M,D,g)$, the
embedded surface $S$ and a choice of sign. It is a vector field on
$S\setminus\car(S)$ whose vectors have unit length and lie in
$D|_S\cap TS$ with a continuous choice of sign. In particular, the
vector field $\widehat{X}_S$ remains unchanged if $u$ is multiplied by
a positive function. Let $\drift\colon S\setminus\car(S)\to \R$ be the
function given by
\begin{equation}\label{defn:drift}
  \drift=\frac{X_0u}{\sqrt{(X_1u)^2+(X_2u)^2}}\;.
\end{equation}
Similarly to the vector field $\widehat{X}_S$, the function $b$ can
be understood intrinsically. Let $\widehat{X}^{\perp}_{S}$
be such that
$(\widehat{X}_S,\widehat{X}^{\perp}_{S})$ is an oriented orthonormal
frame for $D|_{S\setminus\car(S)}$. The function $b$ is then uniquely
given by requiring $b\widehat{X}^{\perp}_{S}-X_{0}$ to be a vector
field on $S\setminus\car(S)$. Set
\begin{equation}\label{limitLB}
  \Delta_0=\widehat{X}_S^2+\drift \widehat{X}_S\;,
\end{equation}
which is a second order partial differential operator on
$S\setminus\car(S)$. 
The operator $\Delta_0$ is invariant under
multiplications of $u$ by functions which do not change its zero
set. As stated in the theorem below, it arises
as the limiting operator of the Laplace--Beltrami operators
$\Delta_\eps$ in the limit $\eps\to 0$.
\begin{thm}\label{thm:conv_of_LB}
  For any
  twice differentiable function $f\in C_c^2(S\setminus \car(S))$
  compactly supported in $S\setminus \car(S)$, the functions
  $\Delta_\eps f$ converge uniformly on $S\setminus \car(S)$ to
  $\Delta_0 f$ as $\eps\to 0$.
\end{thm}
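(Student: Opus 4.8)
The plan is to realise each $\Delta_\eps$ explicitly through a $g_\eps$-orthonormal frame of $TS$ over $S\setminus\car(S)$ and then pass to the limit term by term. First I would build the frame. Since $\widehat{X}_S$ lies in $D\cap TS$ and has unit length for every $g_\eps$, I keep $e_1=\widehat{X}_S$. For the second field I look for $e_2=\alpha\,\widehat{X}^{\perp}_{S}+\beta X_0$ lying in $TS$ and $g_\eps$-orthonormal to $e_1$. The tangency condition is exactly the defining property of $\drift$, namely that $\drift\,\widehat{X}^{\perp}_{S}-X_0$ is tangent to $S\setminus\car(S)$, while $g_\eps(e_2,e_2)=1$ fixes the normalisation. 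Up to sign this forces
\[
  e_2=\lambda_\eps\,Z,\qquad Z:=X_0-\drift\,\widehat{X}^{\perp}_{S},\qquad
  \lambda_\eps:=\frac{\sqrt\eps}{\sqrt{1+\eps\,\drift^2}} ,
\]
where $Z=-(\drift\,\widehat{X}^{\perp}_{S}-X_0)$ is an $\eps$-independent field tangent to $S\setminus\car(S)$ and $\lambda_\eps=O(\sqrt\eps)$.

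Next I would write $\Delta_\eps$ in divergence form in this frame,
\[
  \Delta_\eps=e_1^2+e_2^2+(\operatorname{div}_{g_\eps}e_1)\,e_1+(\operatorname{div}_{g_\eps}e_2)\,e_2 ,
\]
and control each piece on a compact set. The second-order parts are immediate: $e_1^2=\widehat{X}_S^2$ exactly, while $e_2^2=\lambda_\eps^2Z^2+\lambda_\eps(Z\lambda_\eps)Z$ has coefficients of order $\eps$ and so vanishes in the limit; likewise I expect $\operatorname{div}_{g_\eps}e_2=O(\sqrt\eps)$, so that $(\operatorname{div}_{g_\eps}e_2)\,e_2=O(\eps)$ drops out. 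The one surviving first-order contribution is $(\operatorname{div}_{g_\eps}\widehat{X}_S)\,\widehat{X}_S$, so everything reduces to computing $\lim_{\eps\to0}\operatorname{div}_{g_\eps}\widehat{X}_S$.

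For that I would use the structure-equation formula for the divergence in a two-dimensional orthonormal frame: $\operatorname{div}_{g_\eps}e_1$ equals minus the $e_2$-component of $[e_1,e_2]$, and $\operatorname{div}_{g_\eps}e_2$ equals its $e_1$-component. Expanding $[\widehat{X}_S,\lambda_\eps Z]=(\widehat{X}_S\lambda_\eps)Z+\lambda_\eps[\widehat{X}_S,Z]$ and discarding the $\widehat{X}_S(\ln\lambda_\eps)=O(\eps)$ contribution, I get $\operatorname{div}_{g_\eps}\widehat{X}_S\to -B$, where $B$ is the $Z$-component of $[\widehat{X}_S,Z]$ in the frame $(\widehat{X}_S,Z)$ of $TS$. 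Because $\widehat{X}_S$ and $\widehat{X}^{\perp}_{S}$ both lie in $D=\ker\omega$ while $\omega(X_0)=1$, this component is simply $\omega([\widehat{X}_S,Z])$, and
\[
  B=\omega([\widehat{X}_S,Z])=\omega([\widehat{X}_S,X_0])-\drift\,\omega([\widehat{X}_S,\widehat{X}^{\perp}_{S}]).
\]
Here the Reeb property $\db\omega(X_0,\cdot)\equiv0$ annihilates the first term, while $\db\omega|_D=-\operatorname{vol}_g$ together with $(\widehat{X}_S,\widehat{X}^{\perp}_{S})$ being an oriented orthonormal frame gives $\omega([\widehat{X}_S,\widehat{X}^{\perp}_{S}])=1$. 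Hence $B=-\drift$ and $\operatorname{div}_{g_\eps}\widehat{X}_S\to\drift$, so that $\Delta_\eps f\to\widehat{X}_S^2f+\drift\,\widehat{X}_Sf=\Delta_0f$ pointwise.

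Finally I would upgrade this to the asserted uniform convergence. Every discarded term is an explicit multiple of $\eps$ or $\sqrt\eps$ whose coefficients are smooth on $S\setminus\car(S)$ and hence bounded on the compact set $\operatorname{supp}f$, on which $\sqrt{(X_1u)^2+(X_2u)^2}$ stays bounded away from $0$; since $\Delta_\eps f-\Delta_0f$ is again supported in $\operatorname{supp}f$, the convergence is uniform on all of $S\setminus\car(S)$. I expect the genuinely delicate step to be the divergence computation: one must track the $\eps$-dependent induced metric carefully (equivalently, the Riemannian area form on $S$ scales like $\lambda_\eps^{-1}$) in order to see that the drift $\drift$ is produced precisely by the contact normalisation and the Reeb condition, rather than being lost among the vanishing $O(\sqrt\eps)$ corrections.
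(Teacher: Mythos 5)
Your proposal is correct and follows essentially the same route as the paper: the frame $(\widehat{X}_S,\lambda_\eps Z)$ is exactly the paper's orthonormal frame $(E_1,E_{2,\eps})=(F_1,a_\eps F_2)$ up to the sign of the second field, the divergence-form expansion and the bracket/contact-form computation of the structure constants (Reeb condition killing $\omega([\widehat{X}_S,X_0])$, the normalisation $\db\omega|_D=-\operatorname{vol}_g$ producing the drift $\drift$) are the paper's Lemmas~\ref{lem:Fstruct} and \ref{lem:LB_eps}, and the uniform convergence on $\operatorname{supp}f$ is handled identically. The only cosmetic difference is that you leave the $\widehat{X}_S$-component of $[\widehat{X}_S,Z]$ as an unspecified bounded (continuous, since $u$ is only $C^2$) coefficient, where the paper identifies it as $-\drift h(F_1)-\eta(F_1)$.
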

Since the theorem above only concerns twice
differentiable functions of compact support in $S\setminus \car(S)$,
we do not have to put any additional assumptions on the set of
characteristic points of $S$. 

Following the definition in Balogh, Tyson and Vecchi~\cite{balogh} for
surfaces in the Heisenberg group, we introduce an intrinsic Gaussian
curvature $K_0$ of a surface in a general three-dimensional
contact sub-Riemannian manifold as the limit as $\eps\to 0$ of the
Gaussian
curvatures $K_\eps$ of the Riemannian manifolds $(S,g_{\eps})$. To
derive the expression given in the following proposition, we employ
the same orthogonal frame exhibited to prove
Theorem~\ref{thm:conv_of_LB}.
\begin{propn}\label{prop:gausscurv} Uniformly
  on compact subsets of $S\setminus\car(S)$, we have
  \begin{equation*}
    K_0:=\lim_{\eps\to 0} K_\eps
    =-\widehat{X}_S(b)-b^2\;.
  \end{equation*}
\end{propn}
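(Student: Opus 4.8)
The plan is to compute the Gaussian curvature $K_\eps$ of $(S,g_\eps)$ directly in the $g_\eps$-orthonormal frame used to prove Theorem~\ref{thm:conv_of_LB} and then let $\eps\to0$. On $S\setminus\car(S)$ this frame is $(\widehat{X}_S,\widehat{Y}_S^\eps)$, with $\widehat{X}_S$ as in~\eqref{defn:E1} and
\begin{equation*}
  \widehat{Y}_S^\eps=\frac{\drift\,\widehat{X}_S^{\perp}-X_0}{\sqrt{\drift^2+\eps^{-1}}}\;,
\end{equation*}
since $\drift\,\widehat{X}_S^{\perp}-X_0$ is tangent to $S$ by the defining property of $\drift$, is $g_\eps$-orthogonal to $\widehat{X}_S$ because both $\widehat{X}_S^{\perp}$ and $X_0$ are, and has $g_\eps$-length $\sqrt{\drift^2+\eps^{-1}}$. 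Writing the structure functions as $[\widehat{X}_S,\widehat{Y}_S^\eps]=\alpha_\eps\widehat{X}_S+\beta_\eps\widehat{Y}_S^\eps$, the intrinsic expression for the Gaussian curvature of a surface in an orthonormal frame reads
\begin{equation*}
  K_\eps=\widehat{X}_S(\beta_\eps)-\widehat{Y}_S^\eps(\alpha_\eps)-\alpha_\eps^2-\beta_\eps^2\;,
\end{equation*}
so it suffices to identify the limits of $\alpha_\eps$ and $\beta_\eps$, and of $\widehat{X}_S(\beta_\eps)$, uniformly on compacts.

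First I would dispose of the $\alpha_\eps$-terms. Setting $N=\sqrt{\drift^2+\eps^{-1}}$ and $V=\drift\,\widehat{X}_S^{\perp}-X_0$, expanding the bracket gives $\alpha_\eps=N^{-1}g_\eps([\widehat{X}_S,V],\widehat{X}_S)$. As $\widehat{X}_S\in D$, only the $D$-component of $[\widehat{X}_S,V]$ enters this inner product, and that component is $\eps$-independent and bounded on compact subsets of $S\setminus\car(S)$; here one uses that $[\widehat{X}_S,X_0]\in D$, which follows from~\eqref{defn:c01}--\eqref{defn:c02}. Since $N^{-1}$ is of order $\sqrt{\eps}$, we get $\alpha_\eps=O(\sqrt{\eps})$ and, differentiating, $\widehat{Y}_S^\eps(\alpha_\eps)=N^{-1}V(\alpha_\eps)=O(\eps)$. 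Hence both $\alpha_\eps$-contributions to $K_\eps$ vanish uniformly on compacts.

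The heart of the argument is the limit of $\beta_\eps=N^{-2}g_\eps([\widehat{X}_S,V],V)-N^{-1}\widehat{X}_S(N)$. Expanding $g_\eps([\widehat{X}_S,V],V)$, every contribution stays bounded except $-\drift\,g_\eps([\widehat{X}_S,\widehat{X}_S^{\perp}],X_0)$. Since $(\widehat{X}_S,\widehat{X}_S^{\perp})$ is an oriented orthonormal frame of $D$, the normalisation $\db\omega|_D=-\operatorname{vol}_g$ forces $\omega([\widehat{X}_S,\widehat{X}_S^{\perp}])=1$, exactly as for $(X_1,X_2)$; thus the $X_0$-component of $[\widehat{X}_S,\widehat{X}_S^{\perp}]$ equals $1$ and $g_\eps([\widehat{X}_S,\widehat{X}_S^{\perp}],X_0)=\eps^{-1}$. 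Therefore $g_\eps([\widehat{X}_S,V],V)=-\drift\,\eps^{-1}+O(1)$, while $N^{-2}=\eps/(1+\eps\,\drift^2)$ is of order $\eps$ and $N^{-1}\widehat{X}_S(N)=\drift\,\widehat{X}_S(\drift)\,N^{-2}\to0$. The $\eps^{-1}$ blow-up is thus cancelled by $N^{-2}$, leaving $\beta_\eps\to-\drift$ uniformly on compacts; the same explicit expression shows that $\beta_\eps\to-\drift$ in $C^1$, so $\widehat{X}_S(\beta_\eps)\to-\widehat{X}_S(\drift)$.

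Substituting these limits into the curvature formula yields $K_0=\lim_{\eps\to0}K_\eps=-\widehat{X}_S(\drift)-\drift^2$, with convergence uniform on compact subsets of $S\setminus\car(S)$. I expect the main obstacle to be the order-counting around this single cancellation: one must verify that every term in $N^{-2}g_\eps([\widehat{X}_S,V],V)$ other than $-\drift\,\eps^{-1}$ is genuinely $O(\eps)$, and that the limit survives one application of $\widehat{X}_S$, since it is this last point that simultaneously produces the derivative term $-\widehat{X}_S(\drift)$ and upgrades pointwise convergence to the uniform convergence on compacts asserted in the proposition.
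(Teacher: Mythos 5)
Your argument is correct and essentially reproduces the paper's proof: your frame $(\widehat{X}_S,\widehat{Y}_S^\eps)$ is exactly the frame $(E_1,E_{2,\eps})=(F_1,a_\eps F_2)$ of Lemma~\ref{lem:LB_eps} since $\widehat{X}_S^{\perp}=J(F_1)$ and $N=a_\eps^{-1}$, you apply the same orthonormal-frame formula for the Gaussian curvature, and your identification of the surviving term via $\omega([\widehat{X}_S,\widehat{X}_S^{\perp}])=1$ is the same mechanism the paper packages as $b_2=-b$ in Lemma~\ref{lem:Fstruct}. The order-counting you flag as the main obstacle is precisely what the paper carries out with the bounds on $a_\eps F_2(a_\eps)$ and $F_1\bigl(F_1(a_\eps)/a_\eps\bigr)$, so no gap remains.
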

We now consider the canonical stochastic process on $S\setminus\car(S)$
whose generator is $\frac{1}{2}\Delta_0$. Assuming that it
starts at a fixed point then, up to explosion, the
process moves along the unique leaf of the characteristic
foliation picked out by the starting point. As shown by the
next theorem and the following proposition,
for this stochastic process,
elliptic characteristic points are inaccessible, while hyperbolic
characteristic points are accessible from the separatrices.
Recall that what we call hyperbolic characteristic points
are known as saddles in the dynamical systems literature, whereas what
is referred to as hyperbolic points in their language are
non-degenerate characteristic points in our terminology.
\begin{thm}\label{thm:alive}
  The set of elliptic characteristic points in a surface $S$ embedded
  in $M$ is inaccessible for the stochastic process
  with generator $\frac{1}{2}\Delta_0$ on $S\setminus\car(S)$.
\end{thm}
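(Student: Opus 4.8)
The plan is to reduce the statement to a one-dimensional boundary classification and then apply Feller's test. Since $\Delta_0=\widehat{X}_S^2+b\,\widehat{X}_S$ differentiates only in the single direction $\widehat{X}_S$, the diffusion started at a point of $S\setminus\car(S)$ never leaves the leaf of the characteristic foliation through that point, so it is a regular one-dimensional diffusion along that leaf. As elliptic characteristic points are non-degenerate they are isolated in $\car(S)$, hence it suffices to fix one such point $x_0$ and show it is inaccessible from the leaf limiting onto it. Parametrising that leaf by arc length $\sigma$, so that $\widehat{X}_S=\partial_\sigma$ and $\Delta_0=\partial_\sigma^2+b\,\partial_\sigma$, the process becomes the one-dimensional diffusion on an interval with generator $\tfrac12(\partial_\sigma^2+b\,\partial_\sigma)$, and we arrange $x_0$ to sit at the endpoint $\sigma=0$. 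I would then recall that, in terms of the scale density $\mathfrak{s}(\sigma)=\exp\big(-\!\int^{\sigma}b\big)$ (defined up to a positive constant, independent of the orientation chosen) and the speed density $\mathfrak{m}\propto 1/\mathfrak{s}$, Feller's test declares the endpoint $0$ inaccessible precisely when
\begin{equation*}
  \int_0 \mathfrak{s}(\sigma)\Big(\int_\sigma \mathfrak{m}\Big)\dd\sigma=\infty ,
\end{equation*}
so the entire problem is to pin down the blow-up rate of $\mathfrak{s}$ at $x_0$.

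Next I would analyse the foliation near $x_0$ through the smooth vector field $V=(X_2u)X_1-(X_1u)X_2$, which is tangent to $S$ since $Vu=0$, vanishes exactly on $\car(S)$, and satisfies $\widehat{X}_S=V/|V|$ with $|V|=\sqrt{(X_1u)^2+(X_2u)^2}$. At a characteristic point the frame $(X_1,X_2)$ spans $T_{x_0}S$, so in local coordinates on $S$ adapted to this frame a direct computation gives the linearisation of $V$ at $x_0$ as the matrix
\begin{equation*}
  B=\begin{pmatrix} X_1X_2u & X_2X_2u\\ -X_1X_1u & -X_2X_1u\end{pmatrix}\!(x_0),\qquad
  \det B=\det\big((\hess u)(x_0)\big),\quad \operatorname{tr}B=[X_1,X_2]u(x_0)=(X_0u)(x_0)=:\tau,
\end{equation*}
the last identity using $[X_1,X_2]=c_{12}^1X_1+c_{12}^2X_2+X_0$ and $X_1u(x_0)=X_2u(x_0)=0$. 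Ellipticity, $\det B>0$, forces the eigenvalues of $B$ to be complex conjugate (a focus) or real of one sign (a node); in both cases the leaf reaches $x_0$ in infinite flow time of $V$ but in finite arc length, placing $x_0$ at the finite endpoint $\sigma=0$.

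The key asymptotic estimate comes from the clean identity $b\,\dd\sigma=(X_0u)\,\dd t$, where $t$ is the flow time of $V$ and $\dd\sigma=|V|\,\dd t$, together with $b=(X_0u)/|V|$. By the $u\mapsto -u$ invariance of $\Delta_0$ we may assume $\tau<0$, so the leaf approaches $x_0$ as $t\to+\infty$. Then $\int b\,\dd\sigma=\int (X_0u)\,\dd t\sim \tau\,t$, the correction being integrable because $X_0u-\tau=O(|y|)$ decays exponentially along the flow. Since $|y|\sim e^{\mu t}$ with $\mu$ the (real part of the) eigenvalue governing the approach, one has $t\sim\mu^{-1}\log\sigma$, whence
\begin{equation*}
  \mathfrak{s}(\sigma)=\exp\Big(-\!\int^{\sigma}b\Big)\sim \sigma^{-\tau/\mu}\qquad\text{as }\sigma\downarrow 0 .
\end{equation*}
For a focus $\mu=\tau/2$ gives exponent exactly $-2$, while for a node $\tau/\mu=1+(\text{other eigenvalue})/\mu\ge 2$ along generic leaves and is still $>1$ along the separatrices; thus in every elliptic case $\mathfrak{s}(\sigma)\gtrsim\sigma^{-2}$.

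Inaccessibility then follows immediately: the speed density $\mathfrak{m}\propto 1/\mathfrak{s}\lesssim\sigma^{2}$ is integrable near $0$, so $\int_\sigma\mathfrak{m}$ converges to a positive constant, whereas $\int_0\mathfrak{s}(\sigma)\dd\sigma\gtrsim\int_0\sigma^{-2}\dd\sigma=\infty$, so the Feller integral diverges and $x_0$ cannot be reached in finite time. The degenerate subcase $\tau=0$ is handled separately and trivially: then $B$ has purely imaginary eigenvalues, the nearby leaves are closed curves encircling $x_0$, and $x_0$ is not even a limit point of the leaf through the starting point. I expect the main obstacle to be making the second and third steps rigorous, namely controlling the nonlinear remainder in the linearisation of $V$ sharply enough to extract the blow-up rate of $\mathfrak{s}$; this is exactly where the hypothesis $\det(\hess u)>0$ is essential, since for a hyperbolic point the same computation yields exponents in $(-1,\infty)$ and the point becomes accessible along its separatrices.
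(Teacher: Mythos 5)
Your proposal is correct and follows essentially the same route as the paper: reduce to a one-dimensional diffusion along each leaf, extract the blow-up rate $b\sim(\lambda s)^{-1}$ with $\lambda$ governed by the eigenvalues of the linearisation $((\hess u)(x_0))J$ (whose trace is $(X_0u)(x_0)\neq 0$ because $\db u\neq 0$ while $X_1u=X_2u=0$ at $x_0$, so your $\tau=0$ subcase is in fact vacuous), and conclude inaccessibility from divergence of the scale integral --- the paper uses the Cherny--Engelbert classification, equivalent to your Feller test, and obtains the asymptotics from $\partial_s(1/b)=(\hess u)(J\widehat{X}_S,\widehat{X}_S)$ rather than your flow-time identity. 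The only slip is the claim $\mathfrak{s}(\sigma)\gtrsim\sigma^{-2}$ in every elliptic case: along the separatrices of a node the exponent $\tau/\mu$ lies in $(1,2]$, but since exponent $>1$ is all the test requires, the conclusion stands.
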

In Section~\ref{sec:hyperpara}, we discuss an example of a surface in the
Heisenberg group whose induced stochastic process is killed in finite
time if started along the separatrices of the characteristic
point. Indeed, this
phenomena always occurs in the presence of a hyperbolic characteristic
point.
\begin{propn}\label{propn:hyperbolic}
  Suppose that the surface $S$ embedded in $M$ has a hyperbolic
  characteristic point. Then the stochastic process having generator
  $\frac{1}{2}\Delta_0$ and started on the separatrices of
  the hyperbolic characteristic point reaches that characteristic point
  with positive probability.
\end{propn}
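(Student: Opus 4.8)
The plan is to reduce the statement to a boundary–classification problem for a one-dimensional diffusion and then apply Feller's test. Since the process with generator $\frac12\Delta_0=\frac12(\widehat{X}_S^2+\drift\,\widehat{X}_S)$ started on a separatrix stays on the leaf carrying it, I would first parametrise that leaf by its $g$-arclength $s$, so that $\widehat{X}_S=\partial_s$ and the generator becomes the one-dimensional operator $\frac12(\partial_s^2+\drift(s)\,\partial_s)$ on an interval one of whose endpoints, say $s_*$, corresponds to the hyperbolic characteristic point $x_0$. The vector field $V=(X_2u)X_1-(X_1u)X_2$ satisfies $Vu\equiv 0$, hence is tangent to $S$ and has an isolated zero at $x_0$; since $\det((\hess u)(x_0))\neq 0$ this zero is nondegenerate, so by the stable/unstable manifold theorem each separatrix is a $C^1$ curve reaching $x_0$ tangentially to an eigendirection. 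In particular $s_*$ is finite and arclength is comparable to the distance from $x_0$.

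The decisive computation is the behaviour of the scale density $p'(s)=\exp(-\int^s\drift\,d\sigma)$ near $s_*$. Here I would change variables from arclength $s$ to the flow time $\tau$ of $V$: since $\widehat{X}_S=V/|V|_g$ with $|V|_g=\sqrt{(X_1u)^2+(X_2u)^2}$, one has $ds=|V|_g\,d\tau$, and because $\drift=(X_0u)/|V|_g$ this yields the clean identity $\drift\,ds=(X_0u)\,d\tau$. Thus $\int^s\drift\,d\sigma=\int^\tau(X_0u)\,d\theta\sim(X_0u)(x_0)\,\tau$ as $x_0$ is approached. Writing the linearisation of $V$ at $x_0$ in the chart $(X_1u,X_2u)$ — admissible since its differential at $x_0$ is essentially $\hess u(x_0)$ — I would identify its matrix $B$ and compute $\operatorname{tr}B=X_1X_2u-X_2X_1u=[X_1,X_2]u=(X_0u)(x_0)$ and $\det B=\det((\hess u)(x_0))$. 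Hyperbolicity, i.e.\ $\det((\hess u)(x_0))<0$, forces real eigenvalues $\lambda_1>0>\lambda_2$ of opposite sign with $\lambda_1+\lambda_2=(X_0u)(x_0)$, and along the separatrix tangent to the $\lambda$-eigendirection one has $|s_*-s|\sim e^{\lambda\tau}$, so that $\tau\sim\lambda^{-1}\log|s_*-s|$ and $p'(s)\sim|s_*-s|^{\beta}$ with $\beta=-(X_0u)(x_0)/\lambda$.

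The key point is then that $\beta=-(\lambda_1+\lambda_2)/\lambda>-1$ on every separatrix, precisely because $\lambda_1$ and $\lambda_2$ have opposite signs; this is exactly where hyperbolicity enters, and the same computation with eigenvalues of equal sign gives $\beta<-1$ and explains why elliptic points are inaccessible. Since $\beta>-1$, the density $p'$ is integrable at $s_*$, so the scale function has a finite limit $p(s_*)$, while the speed density $m'(s)=2/p'(s)\sim|s_*-s|^{-\beta}$ makes Feller's accessibility integral $\int^{s_*}(p(s_*)-p(\sigma))\,dm(\sigma)$ finite. Hence $s_*$ is an accessible (regular or exit) boundary lying at finite scale. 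Comparing scales on a small interval $(s_*-\eta,s_*)$, the probability of reaching $s_*$ before $s_*-\eta$ from an interior point $x$ equals $(p(x)-p(s_*-\eta))/(p(s_*)-p(s_*-\eta))>0$, and accessibility guarantees this happens in finite time; translating back, the process started on the separatrix reaches the hyperbolic characteristic point with positive probability.

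I expect the main obstacle to be the local analysis at $x_0$: rigorously justifying that the separatrix approaches $x_0$ tangentially with arclength comparable to distance, and that the subleading corrections to $\drift$ (and to the linearised flow) do not spoil the asymptotics $p'(s)\sim|s_*-s|^{\beta}$, so that the Feller integral is genuinely controlled. Everything downstream — the identities $\operatorname{tr}B=(X_0u)(x_0)$ and $\det B=\det((\hess u)(x_0))$, the inequality $\beta>-1$, and the convergence of the accessibility integral — is then a routine consequence of $\det((\hess u)(x_0))<0$.
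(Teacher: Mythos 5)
Your argument is correct and establishes the proposition, but it takes a genuinely different route from the paper at both of its key steps, so it is worth comparing. For the drift asymptotics, the paper (Lemma~\ref{lem:b4rest}, after normalising $X_0u\equiv 1$) Taylor-expands $\drift^{-1}$ along an arc-length-parameterised integral curve of $\widehat{X}_S$ to obtain $\drift(\gamma(s))=\tfrac{1}{\lambda_i s}+O(1)$, whereas you pass to the flow time $\tau$ of the unnormalised field $(X_2u)X_1-(X_1u)X_2$ and exploit the identity $\drift\,\db s=(X_0u)\,\db\tau$ together with the exponential rate $\e^{\lambda_i\tau}$ of approach along the separatrix; the two computations yield the same scale-density exponent, and your normalisation-free criterion $\beta=-(\lambda_1+\lambda_2)/\lambda_i>-1$ if and only if $\lambda_1\lambda_2<0$ isolates cleanly where hyperbolicity enters (and, with the reverse inequality, simultaneously explains the node case of Theorem~\ref{thm:alive}). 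For the probabilistic conclusion, the paper checks the integral conditions of the Cherny--Engelbert classification of singular points (their Theorems~2.12 and 2.13, treated separately on the two types of separatrix), while you use the classical Feller test: finiteness of the scale function at the boundary plus finiteness of the accessibility integral $\int^{s_*}(p(s_*)-p(\sigma))\,\db m(\sigma)$. For the bare statement that the point is reached with positive probability these are equivalent, and Feller's test is arguably the more economical tool; the Cherny--Engelbert framework yields finer information (uniqueness and behaviour at or beyond the singular point) that the proposition does not require. The one step you rightly flag as delicate --- that the separatrix is a $C^1$ invariant curve along which the ODE $\dot\xi=\lambda_i\xi+o(\xi)$ gives $\tau=\lambda_i^{-1}\log|s_*-s|+O(1)$, so that lower-order corrections do not disturb the exponent --- plays the same role as the Taylor expansion of $\drift^{-1}$ in the paper's Lemma~\ref{lem:b4rest} and is justified for $u\in C^2$; with only $C^2$ regularity the error in the exponent may be $o(1)$ rather than $O(1)$ in the logarithmic variable, but since the inequality $\beta>-1$ is strict this does not affect the convergence of either integral.
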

The Sections~\ref{sec:heisenberg} and \ref{sec:model} are devoted to
illustrating the various behaviours the canonical stochastic process
induced on the surface $S$ can show. Besides illustrating
Proposition~\ref{propn:hyperbolic}, we show that three classes of
familiar stochastic processes arise when considering a natural choice for
the surface $S$ in the three classes of model spaces for three-dimensional
sub-Riemannian structures, which are the
Heisenberg group $\Hs$, and the special unitary group $\SU$ and the
special linear group $\SL$ equipped with sub-Riemannian
contact structures for fibre inner products differing by a constant
multiple. In all these cases, the orthonormal frame
$(X_1,X_2)$ for the distribution $D$ is formed by two left-invariant
vector fields which together with the Reeb vector field $X_0$ satisfy,
for some $\kappa\in\R$, the commutation relations
\begin{equation*} 
  [X_{1},X_{2}]=X_{0}\;,\quad
  [X_{0},X_{1}]=\kappa X_{2}\;, \quad
  [X_{0},X_{2}]=-\kappa X_{1}\;,
\end{equation*}
with $\kappa=0$ in the Heisenberg group, $\kappa>0$ in $\SU$ and
$\kappa<0$ in $\SL$.
Associated with each of these Lie groups and their Lie algebras, we
have the group exponential map $\exp$ for which we identify a
left-invariant vector field with its value at the origin.
\begin{thm}\label{thm:model}
  Fix $\kappa\in\R$. For $\kappa\not=0$,
  let $k\in\R$ with $k>0$ be such that
  $|\kappa|=4k^2$. Set $I=(0,\frac{\pi}{k})$ if $\kappa>0$ and
  $I=(0,\infty)$ otherwise.
  In the model space for three-dimensional
  sub-Riemannian structures corresponding to $\kappa$,
  we consider the embedded surface $S$ parameterised as
  \begin{equation*}
    S=\{\exp(r\cos \theta X_{1}+r\sin \theta X_{2}) :
    r\in I\mbox{ and } \theta\in [0,2\pi)\}\;.
  \end{equation*}
  Then the limiting operator $\Delta_0$ on $S$ is given by
  \begin{equation*}
    \Delta_0=\frac{\pt^2}{\pt r^2}+
    \drift\left(r\right)\frac{\pt}{\pt r}\;,
  \end{equation*}
  where
  \begin{equation*}
    \drift(r)=
    \begin{cases}
      2k\cot(k r)  &\mbox{if }\kappa=4k^2\\
      \frac{2}{r}  &\mbox{if }\kappa=0\\
      2k\coth(k r) &\mbox{if }\kappa=-4k^2\\
    \end{cases}\;.
  \end{equation*}
  The stochastic process induced by the operator
  $\frac{1}{2}\Delta_0$ moving along the leaves of the
  characteristic foliation of $S$
  is a Bessel process of order $3$ if $\kappa=0$, a Legendre
  process of order $3$ if $\kappa>0$ and a hyperbolic Bessel process of
  order $3$ if $\kappa<0$.
\end{thm}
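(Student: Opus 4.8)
The plan is to compute the intrinsic operator $\Delta_0 = \widehat{X}_S^2 + \drift\,\widehat{X}_S$ of \eqref{limitLB} directly in the coordinates $(r,\theta)$ supplied by the given parameterisation, rather than first producing a defining function $u$. Writing $v_\theta = \cos\theta\,X_1 + \sin\theta\,X_2$, each leaf of the characteristic foliation is the one-parameter subgroup curve $r\mapsto\exp(rv_\theta)$ with $\theta$ fixed: its velocity is the left-invariant field $v_\theta$ evaluated along the curve, which is horizontal, of unit length and tangent to $S$, hence lies in $D|_S\cap TS$. Consequently $r$ is the sub-Riemannian arc-length along each leaf and, up to sign, $\widehat{X}_S=\frac{\pt}{\pt r}$ as a coordinate vector field, so that $\widehat{X}_S^2=\frac{\pt^2}{\pt r^2}$ and $\Delta_0=\frac{\pt^2}{\pt r^2}+\drift\,\frac{\pt}{\pt r}$. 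It then remains only to identify $\drift$, for which I would use the intrinsic characterisation stated after \eqref{defn:drift}: with $\widehat{X}^\perp_S=w_\theta:=-\sin\theta\,X_1+\cos\theta\,X_2$ the vector completing $(\widehat{X}_S,\widehat{X}^\perp_S)$ to an oriented orthonormal frame of $D$, the function $\drift$ is uniquely determined by requiring $\drift\,w_\theta-X_0$ to be tangent to $S$.

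To make this a computation I would express $T_pS$ at $p=\exp(rv_\theta)$ in the left-trivialised frame $(v_\theta,w_\theta,X_0)$. Its first generator is $\frac{\pt}{\pt r}=v_\theta$, and the second is $\frac{\pt}{\pt\theta}\exp(rv_\theta)$, which I would evaluate using the differential of the exponential map,
\begin{equation*}
  \frac{\pt}{\pt\theta}\exp(rv_\theta)
  = \left(L_{\exp(rv_\theta)}\right)_*
    \left[\frac{1-\e^{-A}}{A}\,(r\,w_\theta)\right],
  \qquad A=r\operatorname{ad}_{v_\theta}\;.
\end{equation*}
The key structural fact, read off from the commutation relations, is that $\operatorname{ad}_{v_\theta}w_\theta=X_0$ and $\operatorname{ad}_{v_\theta}X_0=-\kappa\,w_\theta$, so that $\operatorname{ad}_{v_\theta}$ squares to $-\kappa\,\mathrm{Id}$ on the plane $\operatorname{span}\{w_\theta,X_0\}$ while fixing $v_\theta$. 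Summing the resulting power series then gives $\frac{\pt}{\pt\theta}\exp(rv_\theta)=\alpha(r)\,w_\theta+\beta(r)\,X_0$ (up to the left translation) with coefficients that are trigonometric for $\kappa>0$, polynomial for $\kappa=0$, and hyperbolic for $\kappa<0$.

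With $T_pS=\operatorname{span}\{v_\theta,\ \alpha(r)\,w_\theta+\beta(r)\,X_0\}$ in hand, the requirement that $\drift\,w_\theta-X_0$ lie in $T_pS$ reduces, after discarding the $v_\theta$-component, to $\drift=\lambda\,\alpha(r)$ and $-1=\lambda\,\beta(r)$; eliminating $\lambda$ yields $\drift(r)=-\alpha(r)/\beta(r)$, a function of $r$ alone, as the rotational symmetry of the model space already suggests. Carrying this out in the three cases and simplifying with the half-angle identities produces $\mu\cot(\mu r/2)$, $2/r$ and $\mu\coth(\mu r/2)$ with $\mu=\sqrt{|\kappa|}$; substituting $|\kappa|=4k^2$, that is $\mu=2k$, turns these into $2k\cot(kr)$, $2/r$ and $2k\coth(kr)$, as claimed. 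The interval $I$ is precisely the range of $r$ on which $\exp(rv_\theta)$ remains in $S\setminus\car(S)$, its endpoints being the characteristic points (the origin at $r=0$ and, for $\kappa>0$, the antipode at $r=\pi/k$, where $\cot(kr)$ degenerates).

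Finally I would identify the diffusions by comparing the generator $\tfrac12\Delta_0=\tfrac12\big(\tfrac{\pt^2}{\pt r^2}+\drift(r)\tfrac{\pt}{\pt r}\big)$ with the defining generators of the radial processes: the drift $2/r$ is that of a Bessel process of order $3$, while $2k\cot(kr)$ and $2k\coth(kr)$ are the drifts of the order-$3$ Legendre process and hyperbolic Bessel process, namely the radial parts of Brownian motion on the round and hyperbolic space forms of the corresponding curvature. The main obstacle I anticipate is the exponential-map evaluation of $\frac{\pt}{\pt\theta}\exp(rv_\theta)$ together with the careful handling of signs and of the orientation convention, so as to ensure that the $\drift$ obtained is the one fixed by \eqref{defn:drift} rather than its negative; once $\operatorname{ad}_{v_\theta}^2=-\kappa\,\mathrm{Id}$ is established, what remains is a uniform three-case summation and simplification.
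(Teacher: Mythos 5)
Your proposal is correct, but it takes a genuinely different route from the paper. The paper establishes Theorem~\ref{thm:model} by three separate coordinate computations (Sections~\ref{sec:para}, \ref{sec:SU} and \ref{sec:SL}): in each model space it picks an explicit defining function $u$ for the surface (the paraboloid/plane $z=a(x^2+y^2)$ in $\Hs$, $u=w$ on $\SU$, $u=y-z$ on $\SL$), computes $X_0u$, $X_1u$, $X_2u$ in coordinates, forms $\widehat{X}_S$ and $\drift$ from the defining formulas~(\ref{defn:E1}) and (\ref{defn:drift}), and then passes to polar-type coordinates adapted to the foliation; the exponential parameterisation only appears afterwards, in Section~\ref{s:unif}, as an a posteriori unification. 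You instead bypass the choice of $u$ entirely: you take the intrinsic characterisations of $\widehat{X}_S$ (unit field in $D|_S\cap TS$) and of $\drift$ (via $\drift\,\widehat{X}_S^{\perp}-X_0$ being tangent to $S$) as the starting point, identify the leaves as the one-parameter subgroups $r\mapsto\exp(rv_\theta)$, and compute $T_pS$ once and for all from the differential of the group exponential using $\operatorname{ad}_{v_\theta}w_\theta=X_0$, $\operatorname{ad}_{v_\theta}X_0=-\kappa w_\theta$, hence $\operatorname{ad}_{v_\theta}^2=-\kappa\,\mathrm{Id}$ on $\operatorname{span}\{w_\theta,X_0\}$. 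This yields $\drift=-\alpha/\beta=\mu\cot(\mu r/2)$, $2/r$, $\mu\coth(\mu r/2)$ with $\mu=2k$ in a single uniform calculation, and I have checked the coefficients $\alpha(r)=\sin(\mu r)/\mu$, $\beta(r)=-(1-\cos(\mu r))/\mu^2$ (and their degenerate and hyperbolic analogues) do give the stated drifts, with the positive sign consistent with the orientation convention fixed after~(\ref{defn:drift}); the sign ambiguity you worry about is in any case harmless since $\Delta_0$ is invariant under the simultaneous flip of $\widehat{X}_S$ and $\drift$. What your approach buys is uniformity and independence of coordinates, with the three cases differing only in the summation of one power series; what the paper's approach buys is the explicit geometric description of the surfaces and their foliations (logarithmic spirals, great circles, hyperbolic rays) and a concrete verification of the general machinery of Sections~\ref{sec:general} and \ref{sec:stochastic} on examples. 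The only point worth making explicit in a write-up is the one you implicitly use: that the intrinsic characterisation of $\drift$ determines it uniquely, which holds because at a non-characteristic point $T_pS\cap\operatorname{span}\{\widehat{X}_S^{\perp},X_0\}$ is one-dimensional and not contained in $D_p$.
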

Notably, the stochastic
processes recovered in the above theorem
are all related to one-dimensional Brownian motion by
the same type of
Girsanov transformation, with only the sign of a parameter distinguishing
between them. For the details, see Revuz and Yor \cite[p.~357]{revuz}.
A Bessel process of order $3$ arises by conditioning a one-dimensional
Brownian motion started on the positive real line to never hit the
origin, whereas a Legendre process of order $3$ is obtained by
conditioning a Brownian motion started inside an interval to never 
hit either endpoint of the interval. The examples making up
Theorem~\ref{thm:model} can be considered as model cases for our
setting, and all of them illustrate Theorem~\ref{thm:alive}.

Notice that the limiting operator we obtain on the leaves is not the
Laplacian associated with the metric structure restricted to the
leaves as the latter has no drift term. However, the operator
$\Delta_0$ restricted to a leaf can be considered as a weighted
Laplacian. For a smooth measure $\mu=h^2 \dd x$ on an interval $I$ of the
Euclidean line $\R$, the weighted Laplacian applied to a scalar
function $f$ yields
\begin{equation*}
  \operatorname{div}_\mu\left(\frac{\pt f}{\pt x}\right)=
  \frac{\pt^2f}{\pt x^2}+
  \frac{2 h'(x)}{h(x)}\frac{\pt f}{\pt x}\;.
\end{equation*}
In the model cases above, we have
\begin{equation*}
  h(r)=
  \begin{cases}
      \sin\left(k r\right)  &\mbox{if }\kappa=4k^2\\
      r  &\mbox{if }\kappa=0\\
      \sinh\left(k r\right) &\mbox{if }\kappa=-4k^2\\    
  \end{cases}\;.
\end{equation*}

We prove Theorem~\ref{thm:conv_of_LB}
and Proposition~\ref{prop:gausscurv}
in Section~\ref{sec:general}, where the proof of the theorem
relies on the expression of $\Delta_\eps|_{S\setminus\car(S)}$
given in Lemma~\ref{lem:LB_eps} in terms of an orthogonal frame for
$T(S\setminus\car(S))$. In Section~\ref{sec:stochastic}, we prove
Theorem~\ref{thm:alive} and Proposition~\ref{propn:hyperbolic}
using Lemma~\ref{lem:b4focus} and Lemma~\ref{lem:b4rest}, which
expand the function
$\drift\colon S\setminus\car(S)\to \R$ from (\ref{defn:drift})
in terms of the arc length along the integral curves of
$\widehat{X}_S$.
The
results are illustrated in the last two sections. In
Section~\ref{sec:heisenberg}, we study quadric surfaces in the
Heisenberg group, whereas in Section~\ref{sec:model}, we consider 
canonical surfaces in $\SU$ and $\SL$
equipped with the standard sub-Riemannian contact structures.
The examples establishing Theorem~\ref{thm:model}
are discussed in Section~\ref{sec:para}, Section~\ref{sec:SU} and
Section~\ref{sec:SL}, with a unified viewpoint presented in
Section~\ref{s:unif}.

\proof[Acknowledgement]
This work was supported by the Grant ANR-15-CE40-0018 ``Sub-Riemannian
Geometry and Interactions'' of the French ANR. The third author is
supported by grants from R{\'e}gion Ile-de-France. The fourth author is
supported by the Fondation Sciences Math{\'e}matiques de Paris. All
four authors would like to thank Robert Neel for illuminating discussions.

\section{Family of Laplace--Beltrami operators on
  the embedded surface}
\label{sec:general}
We express the Laplace--Beltrami operators $\Delta_\eps$
of the Riemannian manifolds $(S,g_\eps)$
in terms of
two vector fields on the surface $S$ which are orthogonal for
each of the Riemannian approximations employing the Reeb vector field.
Using these expressions of the Laplace--Beltrami operators
$\Delta_\eps$ where only the coefficients
and not the vector fields depend on $\eps>0$, we prove
Theorem~\ref{thm:conv_of_LB}. The orthogonal frame exhibited further
allows us to establish Proposition~\ref{prop:gausscurv}.

For a vector field $X$ on the manifold $M$, the property
$Xu|_S\equiv 0$ ensures that $X(x)\in T_xS$ for all $x\in S$.
Therefore, we see that $F_1$ and $F_2$ given by
\begin{align}
  F_1&=\frac{(X_2u)X_1-(X_1u)X_2}{\sqrt{(X_1u)^2+(X_2u)^2}}\label{defn:F1}
  \qquad\mbox{and}\\
  F_2&=\label{defn:F2}
       \frac{(X_0u)(X_1u)X_1+(X_0u)(X_2u)X_2}{(X_1u)^2+(X_2u)^2}-X_0\;,
\end{align}
are indeed well-defined vector fields on $S\setminus \car(S)$
due to~(\ref{cri:char}) and because we have
$F_1u|_{S\setminus \car(S)}\equiv 0$ as well as 
$F_2u|_{S\setminus \car(S)}\equiv 0$. Here, $S\setminus \car(S)$ is a
manifold itself because the characteristic set $\car(S)$ is a closed
subset of $S$. We observe that both $F_1$ and
$F_2$ remain unchanged if the function $u$ defining the surface $S$ is
multiplied by a positive function, whereas $F_1$ changes sign and
$F_2$ remains unchanged if $u$ is
multiplied by a negative function. Since the zero set of the twice
differentiable submersion defining $S$ needs to remain unchanged, these
are the only two options which can occur. Observe that the vector field
$F_1$ on $S\setminus \car(S)$
is nothing but the vector field $\widehat{X}_S$ defined
in~(\ref{defn:E1}).

Recalling that $g_\eps$ is the restriction to
the surface $S$ of the Riemannian
metric on $M$ obtained by requiring $(X_1,X_2,\sqrt{\eps} X_0)$ to be a
global orthonormal frame, we further obtain
\begin{equation*}
  g_\eps(F_1,F_2)=0
\end{equation*}
as well as
\begin{equation}\label{eq:normF}
  g_\eps(F_1,F_1)=1
  \quad\mbox{and}\quad
  g_\eps(F_2,F_2)=\frac{(X_0u)^2}{(X_1u)^2+(X_2u)^2}+\frac{1}{\eps}\;.
\end{equation}
Thus, $(F_1,F_2)$ is an orthogonal frame for $T (S\setminus\car(S))$
for each Riemannian manifold $(S,g_\eps)$. While in general,
the frame $(F_1,F_2)$ is not orthonormal it has the nice property that it does
not depend on $\eps>0$, which aids the analysis of the convergence
of the operators $\Delta_\eps$ in the limit $\eps\to 0$. Since 
$F_1$ and $F_2$ are vector fields on $S\setminus\car(S)$, there exist
functions $b_1,b_2\colon S\setminus \car(S)\to\R$, not depending on
$\eps>0$, such that
\begin{equation}\label{F_structconst}
  [F_1,F_2]=b_1F_1+b_2F_2\;.
\end{equation}
Whereas determining the functions $b_1$ and $b_2$ explicitly
from~(\ref{defn:F1}) and
(\ref{defn:F2}) is a painful task, we can express them nicely in
terms of, following the notations in~\cite{barilarikohli},
the characteristic deviation $h$ and a
tensor $\eta$ related to the torsion.
Let $J\colon D\to D$ be the linear transformation induced by the
contact form $\omega$ by requiring that, for vector fields $X$ and $Y$ in
the distribution $D$,
\begin{equation}\label{defn:Jcontact}
  g\left(X,J(Y)\right)=\db\omega(X,Y)\;.
\end{equation}
Under the assumption of the existence of the global orthonormal frame
$(X_1,X_2)$ this amounts to saying that
\begin{equation}
  J(X_1)=X_2\label{defn:Jonframe}
  \quad\mbox{and}\quad
  J(X_2)=-X_1\;.
\end{equation}
For a unit-length vector field $X$ in the distribution $D$,
we use $[X,J(X)]|_D$ to denote the restriction of the vector field
$[X,J(X)]$ on $M$ to the distribution $D$ and we set
\begin{align*}
  h(X)&=-g\left([X,J(X)]|_D,X\right)\;,\\
  \eta(X)&=-g\left([X_0,X],X\right)\;,
\end{align*}
where the expression for $\eta$ is indeed well-defined because
according to~(\ref{defn:c01}) and (\ref{defn:c02}), the vector field
$[X_0,X]$ lies in the distribution $D$.
\begin{lemma}\label{lem:Fstruct}
  For $\drift\colon S\setminus\car(S)\to \R$ defined
  by~(\ref{defn:drift}), we have
  \begin{equation*}
    [F_1,F_2]=-\left(bh(F_1)+\eta(F_1)\right)F_1-bF_2\;,
  \end{equation*}
  that is, $b_1=-bh(F_1)-\eta(F_1)$ and $b_2=-b$.
\end{lemma}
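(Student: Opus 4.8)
The plan is to work in the ambient frame $(F_1, J(F_1), X_0)$ adapted to $F_1$, which is a frame for $TM$ along $S\setminus\car(S)$ since $(F_1, J(F_1))$ is an oriented orthonormal frame for $D$ and $X_0$ is transverse to $D$. The crucial first observation is that $F_2$ decomposes cleanly in this frame. From~(\ref{defn:F2}) together with~(\ref{defn:Jonframe}) one computes $J(F_1) = \frac{(X_1u)X_1 + (X_2u)X_2}{\sqrt{(X_1u)^2 + (X_2u)^2}} = \widehat{X}^{\perp}_{S}$, so that, recalling~(\ref{defn:drift}),
\[
  F_2 = b\,J(F_1) - X_0 = b\,\widehat{X}^{\perp}_{S} - X_0\;.
\]
This recovers the intrinsic characterisation of $b$ given after~(\ref{defn:drift}) and reduces the problem to computing two brackets.

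Next I would expand, using the Leibniz rule for the Lie bracket,
\[
  [F_1, F_2] = (F_1 b)\,J(F_1) + b\,[F_1, J(F_1)] - [F_1, X_0]
             = (F_1 b)\,J(F_1) + b\,[F_1, J(F_1)] + [X_0, F_1]\;,
\]
and then read off the components of the right-hand side in the frame $(F_1, J(F_1), X_0)$. The two structural inputs are precisely $h$ and $\eta$: by definition the $F_1$-component of $[F_1, J(F_1)]|_D$ equals $g([F_1, J(F_1)]|_D, F_1) = -h(F_1)$, while by~(\ref{defn:c01}) and~(\ref{defn:c02}) the bracket $[X_0, F_1]$ lies in $D$ with $F_1$-component $g([X_0, F_1], F_1) = -\eta(F_1)$. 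For the transverse direction I would use that $F_1, J(F_1) \in \operatorname{ker}\omega$ together with the Cartan formula and the normalisation $\db\omega|_D = -\operatorname{vol}_g$ to obtain $\omega([F_1, J(F_1)]) = -\db\omega(F_1, J(F_1)) = \operatorname{vol}_g(F_1, J(F_1)) = 1$, so that $[F_1, J(F_1)]$ has $X_0$-component equal to $1$; since $[X_0, F_1]$ and $(F_1 b)J(F_1)$ contribute nothing in the $X_0$-direction, the $X_0$-component of $[F_1, F_2]$ equals $b$.

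To finish, I would compare with the expansion $b_1 F_1 + b_2 F_2 = b_1 F_1 + b_2 b\,J(F_1) - b_2 X_0$. As $F_2$ has no $F_1$-component and $F_1$ has no $X_0$-component, matching the $X_0$-direction gives $b_2 = -b$, and matching the $F_1$-direction gives $b_1 = -b\,h(F_1) - \eta(F_1)$, which is the asserted identity.

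The one point deserving care, and which I expect to be the only genuine subtlety, is the legitimacy of reading off $b_1$ and $b_2$ directly from the frame $(F_1, J(F_1), X_0)$: a priori $[F_1, F_2]$ could carry a nonzero $J(F_1)$-component not accounted for by $b_1 F_1 + b_2 F_2$. This is resolved by noting that $F_1$ and $F_2$ are tangent to $S$, so $[F_1, F_2]$ is again tangent to $S$ and therefore lies in $\operatorname{span}(F_1, F_2)$; equivalently, its $J(F_1)$- and $X_0$-components are automatically in the ratio $b : -1$. Hence the $J(F_1)$-component needs no separate evaluation, being forced to equal $b_2 b = -b^2$ by this consistency.
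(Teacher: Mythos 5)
Your proposal is correct and follows essentially the same route as the paper: both start from the key decomposition $F_2 = b\,J(F_1) - X_0$, extract $b_2$ from the $\omega$-pairing via the Cartan formula and the definition of $J$, and extract $b_1$ from the $g(\cdot,F_1)$-pairing using the definitions of $h$ and $\eta$. Your closing remark that tangency of $[F_1,F_2]$ to $S$ forces the $J(F_1)$-component to be consistent is a point the paper leaves implicit, but it is not a substantive difference in method.
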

\begin{proof}
  We first observe that due to~(\ref{defn:Jonframe}), we can write
  \begin{equation*}
    F_2=bJ(F_1)-X_0\;.
  \end{equation*}
  Using~(\ref{defn:c01}) and (\ref{defn:c02}) as well
  as~(\ref{defn:Jcontact}), it follows that
  \begin{equation*}
    \omega\left([F_1,F_2]\right)
    =\omega\left([F_1,bJ(F_1)-X_0]\right)
    =-\db\omega(F_1,bJ(F_1))
    =-g(F_1,bJ^2(F_1))=b\;.
  \end{equation*}
  On the other hand, from~(\ref{defn:F1}), (\ref{defn:F2}) and
  (\ref{F_structconst}), we deduce
  \begin{equation*}
    \omega\left([F_1,F_2]\right)
    =\omega\left(b_2F_2\right)=-b_2\;,
  \end{equation*}
  which implies that $b_2=-b$, as claimed. It remains to determine
  $b_1$. From~(\ref{defn:Jcontact}), we see that
  \begin{equation*}
    g(F_1,J(F_1))=-\omega([F_1,F_1])=0\;.
  \end{equation*}
  Together with~(\ref{F_structconst}) this yields
  \begin{equation*}
    b_1=g\left([F_1,F_2],F_1\right)
    =g\left([F_1,bJ(F_1)-X_0],F_1\right)
    =bg\left([F_1,J(F_1)]|_D,F_1\right)+g\left([X_0,F_1],F_1\right)\;,
  \end{equation*}
  and therefore, we have $b_1=-bh(F_1)-\eta(F_1)$, as required.
\end{proof}
To derive an expression for the Laplace--Beltrami operators
$\Delta_\eps$ of $(S,g_\eps)$ restricted to $S\setminus\car(S)$ in
terms of the vector fields $F_1$ and $F_2$, it is helpful to consider
the normalised frame associated with the orthogonal frame $(F_1,F_2)$.
For $\eps>0$ fixed, we define $a_\eps\colon S\setminus\car(S)\to \R$
by
\begin{equation}\label{defn:aeps}
  a_\eps=\left(
    \frac{(X_0u)^2}{(X_1u)^2+(X_2u)^2}+\frac{1}{\eps}
    \right)^{-\frac{1}{2}}
\end{equation}
and we introduce the vector fields $E_1$ and $E_{2,\eps}$ on
$S\setminus \car(S)$ given by
\begin{equation}\label{defn:E2}
  E_1=F_1\quad\mbox{and}\quad E_{2,\eps}=a_\eps F_2\;.
\end{equation}
In the Riemannian manifold $(S,g_\eps)$, this yields the orthonormal
frame $(E_1,E_{2,\eps})$ for $T (S\setminus\car(S))$.
\begin{lemma}\label{lem:LB_eps}
  For $\eps>0$, the operator $\Delta_\eps$
  restricted to $S\setminus\car(S)$ can be expressed as
  \begin{equation*}
    \Delta_\eps|_{S\setminus\car(S)}
    =F_1^2+a_\eps^2 F_2^2+
     \left(b-\frac{F_1(a_\eps)}{a_\eps}\right)F_1
     -a_\eps^2\left(bh(F_1)+\eta(F_1)\right)F_2\;.
  \end{equation*}
\end{lemma}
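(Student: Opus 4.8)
The plan is to pass to the orthonormal frame $(E_1,E_{2,\eps})$ from~(\ref{defn:E2}) and to use the coordinate-free expression of the Laplace--Beltrami operator through the Levi-Civita connection $\nabla^\eps$ of $g_\eps$ on $S\setminus\car(S)$. Since $\Delta_\eps f$ is the $g_\eps$-trace of the Riemannian Hessian $\nabla^\eps\db f$, and the latter is $(X,Y)\mapsto X(Yf)-(\nabla^\eps_XY)f$, one has the operator identity $\Delta_\eps=E_1^2+E_{2,\eps}^2-\nabla^\eps_{E_1}E_1-\nabla^\eps_{E_{2,\eps}}E_{2,\eps}$. The whole computation therefore reduces to evaluating the two covariant derivatives $\nabla^\eps_{E_1}E_1$ and $\nabla^\eps_{E_{2,\eps}}E_{2,\eps}$ and then rewriting the result in the $\eps$-independent orthogonal frame $(F_1,F_2)$.

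The first step is to read off the structure function of the orthonormal frame. Using $E_1=F_1$, $E_{2,\eps}=a_\eps F_2$ together with $[F_1,F_2]=b_1F_1+b_2F_2$, where $b_1=-(bh(F_1)+\eta(F_1))$ and $b_2=-b$ by Lemma~\ref{lem:Fstruct}, a one-line bracket computation gives $[E_1,E_{2,\eps}]=\gamma_1E_1+\gamma_2E_{2,\eps}$ with $\gamma_1=a_\eps b_1$ and $\gamma_2=F_1(a_\eps)/a_\eps+b_2$. Because $(E_1,E_{2,\eps})$ is orthonormal and the surface is two-dimensional, the Koszul formula collapses to $\nabla^\eps_{E_1}E_1=-\gamma_1E_{2,\eps}$ and $\nabla^\eps_{E_{2,\eps}}E_{2,\eps}=\gamma_2E_1$, so that $\Delta_\eps=E_1^2+E_{2,\eps}^2+\gamma_1E_{2,\eps}-\gamma_2E_1$. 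It then remains to expand $E_1=F_1$ and $E_{2,\eps}=a_\eps F_2$. The second-order part collapses to $F_1^2+a_\eps^2F_2^2$, and the term $-\gamma_2E_1$ reproduces the stated $F_1$-coefficient $b-F_1(a_\eps)/a_\eps$ at once from $b_2=-b$; these reductions are routine.

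The delicate point, which I expect to be the main obstacle, is the first-order $F_2$-coefficient. Expanding the composition $E_{2,\eps}^2=a_\eps F_2\bigl(a_\eps F_2\,\cdot\,\bigr)$ produces, besides $a_\eps^2F_2^2$, the extra first-order operator $a_\eps F_2(a_\eps)\,F_2$; combined with $\gamma_1E_{2,\eps}=a_\eps^2b_1F_2$ this yields a provisional $F_2$-coefficient $a_\eps^2b_1+a_\eps F_2(a_\eps)$. To reach the claimed coefficient $-a_\eps^2(bh(F_1)+\eta(F_1))=a_\eps^2b_1$ one must therefore control the $F_2(a_\eps)$-contribution. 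As $a_\eps=(b^2+1/\eps)^{-1/2}$ depends on the point only through the drift $b$, this reduces to understanding $F_2(b)$, i.e. the variation of $b$ transverse to the characteristic foliation; establishing that this contribution drops out is, I expect, the crux of the argument. I would note in passing that this term carries the factor $a_\eps F_2(a_\eps)=-\tfrac12 a_\eps^4F_2(b^2)$, hence is of order $\eps^2$ and in any case invisible in the limit $\eps\to0$ underlying Theorem~\ref{thm:conv_of_LB}.
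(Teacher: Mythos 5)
Your route is, up to packaging, the paper's own: the paper also passes to the orthonormal frame $(E_1,E_{2,\eps})$, writes $\Delta_\eps=E_1^2+E_{2,\eps}^2+(\operatorname{div}_\eps E_1)E_1+(\operatorname{div}_\eps E_{2,\eps})E_{2,\eps}$, and evaluates the divergences from the bracket $[E_1,E_{2,\eps}]$; since $\operatorname{div}_\eps E_1=-\gamma_2$ and $\operatorname{div}_\eps E_{2,\eps}=\gamma_1$ in your notation, your Koszul computation and the paper's divergence computation are the same calculation, and your second-order and $F_1$-coefficient reductions match the paper's exactly.

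The point you flag as the crux is not a gap in your argument, and you should stop looking for a cancellation: the cross term $a_\eps F_2(a_\eps)F_2$ produced by expanding $E_{2,\eps}^2=(a_\eps F_2)\circ(a_\eps F_2)$ is genuinely present. Since $a_\eps F_2(a_\eps)=-a_\eps^4\, b\,F_2(b)$, it vanishes identically only if $bF_2(b)\equiv 0$, which fails in general --- e.g.\ for the hyperbolic paraboloid $u=z-axy$ in $\Hs$ one computes $F_2(b^{-2})=4a(a+\tfrac12)(\tfrac12-a)xy/((a+\tfrac12)^2y^2+(\tfrac12-a)^2x^2)\not\equiv 0$. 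The paper's proof stops at the two divergences and asserts that the result ``follows from (\ref{defn:E2}) and (\ref{LB_in_ONF})''; carrying out that last substitution carefully yields precisely your provisional $F_2$-coefficient, so the displayed formula in Lemma~\ref{lem:LB_eps} should carry the additional summand $\tfrac12 F_2(a_\eps^2)F_2=a_\eps F_2(a_\eps)F_2$. As you correctly observe, the omission is harmless downstream: $|a_\eps F_2(a_\eps)|\le \eps^2|bF_2(b)|$ tends to $0$ uniformly on compact subsets of $S\setminus\car(S)$, so the limit $\Delta_0$ in Theorem~\ref{thm:conv_of_LB} is unchanged (and the proof of Proposition~\ref{prop:gausscurv} already bounds $a_\eps F_2(a_\eps)$ separately). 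So: correct approach, correct bookkeeping; the honest conclusion is that the stated identity needs this extra $O(\eps^2)$ term rather than that your derivation is incomplete.
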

\begin{proof}
  Fix $\eps>0$ and let $\operatorname{div}_\eps$ denote the divergence
  operator on the Riemannian manifold $(S,g_\eps)$ with respect to the
  corresponding Riemannian volume form. Since
  $(E_1,E_{2,\eps})$ is an orthonormal frame for
  $T(S\setminus\car(S))$, we have
  \begin{equation}\label{LB_in_ONF}
    \Delta_\eps|_{S\setminus\car(S)}=
    E_1^2+E_{2,\eps}^2+
    \left(\operatorname{div}_\eps E_1\right)E_1+
    \left(\operatorname{div}_\eps E_{2,\eps}\right)E_{2,\eps}\;.
  \end{equation}
  Let $(\nu_1,\nu_{2,\eps})$ denote the dual to the orthonormal frame
  $(E_1,E_{2,\eps})$.
  Proceeding, for instance, in the same way as
  in~\cite[Proof of Proposition~11]{barilari},
  we show that, for any vector field $X$ on $S\setminus\car(S)$,
  \begin{equation*}
    \operatorname{div}_\eps X=\nu_1\left([E_1,X]\right) +
      \nu_{2,\eps}\left([E_{2,\eps},X]\right)\;.
  \end{equation*}
  This together with~(\ref{defn:E2}) and Lemma~\ref{lem:Fstruct}
  implies that
  \begin{equation*}
    \operatorname{div}_\eps E_1
    = \nu_{2,\eps}\left([a_\eps F_2,F_1]\right)
    = -\nu_{2,\eps}\left(a_\eps [F_1,F_2]+F_1(a_\eps)F_2\right)
    = b-\frac{F_1(a_\eps)}{a_\eps}
  \end{equation*}
  as well as
  \begin{equation*}
    \operatorname{div}_\eps E_{2,\eps}
    = \nu_1\left([F_1,a_\eps F_2]\right)
    = \nu_1\left(a_\eps[F_1, F_2] + F_1(a_\eps) F_2\right)
    = -a_\eps\left(bh(F_1)+\eta(F_1)\right)\;.
  \end{equation*}
  The desired result follows from~(\ref{defn:E2}) and (\ref{LB_in_ONF}).
\end{proof}
Note that $\Delta_\eps|_{S\setminus\car(S)}$ in Lemma~\ref{lem:LB_eps}
can equivalently be written as
\begin{equation*}
  \Delta_\eps|_{S\setminus\car(S)}
  =F_1^2+a_\eps^2 F_2^2+
  \left(b-\frac{F_1\left(a_\eps^2\right)}{2a_\eps^2}\right)F_1
     -a_\eps^2\left(bh(F_1)+\eta(F_1)\right)F_2\;.
\end{equation*}
Using Lemma~\ref{lem:LB_eps} we can prove
Theorem~\ref{thm:conv_of_LB}.
\begin{proof}[Proof of Theorem~\ref{thm:conv_of_LB}]
  From~(\ref{defn:drift}) and (\ref{defn:aeps}), we obtain that
  \begin{equation}\label{expr4ae}
    a_\eps^2=\left(\drift^2+\frac{1}{\eps}\right)^{-1}
    =\frac{\eps}{\eps \drift^2 +1}\;,
  \end{equation}
  which we use to compute
  \begin{equation*}
    \frac{F_1(a_\eps)}{a_\eps}
    =\frac{F_1(a_\eps^2)}{2a_\eps^2}
    =-\frac{\eps \drift F_1(\drift)}{\eps \drift^2+1}\;.    
  \end{equation*}
  It follows that
  \begin{equation}\label{basicbds}
    a_\eps^2\leq\eps
    \quad\mbox{as well as}\quad
    \left|\frac{F_1(a_\eps)}{a_\eps}\right|
    \leq \eps \left|\drift F_1(\drift)\right|\;.
  \end{equation}
  Since $u\in C^2(M)$ by assumption, both
  $\drift\colon S\setminus\car(S)\to \R$
  and $F_1(\drift)\colon S\setminus\car(S)\to \R$ are continuous and
  therefore bounded on compact subsets of
  $S\setminus\car(S)$. In a similar way, we argue that
  the function $b_1=-bh(F_1)-\eta(F_1)$ is
  bounded on compact subsets of $S\setminus\car(S)$.
  Due to~(\ref{basicbds}), this implies
  that, uniformly on compact subsets of $S\setminus\car(S)$,
  \begin{equation}\label{11conv}
    \lim_{\eps\to 0}a_\eps^2=0\;,\quad
    \lim_{\eps\to 0}\frac{F_1(a_\eps)}{a_\eps}=0
    \quad\mbox{and}\quad
    \lim_{\eps\to 0}a_\eps^2\left(bh(F_1)+\eta(F_1)\right)=0\;.
  \end{equation}
  Let $f\in C_c^2(S\setminus \car(S))$. We then have
  $F_1f,F_2f\in C_c^1(S\setminus \car(S))$ and
  $F_1^2f,F_2^2f\in C_c^0(S\setminus \car(S))$. Since the
  expression~(\ref{limitLB}) for $\Delta_0$ can be rewritten as
  \begin{equation*}
    \Delta_0=F_1^2+\drift F_1
  \end{equation*}
  and since the
  convergence in~(\ref{11conv}) is uniformly on compact subsets of
  $S\setminus\car(S)$, we deduce from Lemma~\ref{lem:LB_eps} that
  \begin{equation*}
    \lim_{\eps\to 0}
    \left\|\Delta_\eps f-\Delta_0 f\right\|_{\infty,S\setminus \car(S)}
    =\lim_{\eps\to 0}
    \left\|a_\eps^2 F_2^2f-
      \frac{F_1\left(a_\eps\right)}{a_\eps}F_1f
     -a_\eps^2\left(bh(F_1)+\eta(F_1)\right)F_2f
   \right\|_{\infty,S\setminus\car(S)} = 0\;,
  \end{equation*}
  that is, the functions $\Delta_\eps f$ indeed converge uniformly on
  $S\setminus\car(S)$ to $\Delta_0 f$.
\end{proof}
Using the orthonormal frames $(E_1,E_{2,\eps})$,
we easily derive the expression given in
Proposition~\ref{prop:gausscurv} for the
intrinsic Gaussian curvature $K_0$ of the surface $S$
in terms of the vector field $\widehat{X}_S$ and the
function $\drift$. Unlike the reasoning presented in~\cite{balogh},
which further exploits intrinsic
symmetries of the Heisenberg group $\Hs$, our derivation does not rely on
the cancellation of divergent quantities and holds for surfaces
in any three-dimensional contact sub-Riemannian manifold,
cf.~\cite[Remark~5.3]{balogh}.
\begin{proof}[Proof of Proposition~\ref{prop:gausscurv}]
  From Lemma~\ref{lem:Fstruct} and due to~(\ref{F_structconst}) as
  well as (\ref{defn:E2}), we have
  \begin{equation*}
    [E_1,E_{2,\eps}]
    =[F_1,a_\eps F_2]
    =a_\eps[F_1,F_2]+F_1(a_\eps)F_2
    =a_\eps b_1E_1+\left(-b + \frac{F_1(a_\eps)}{a_\eps}\right)E_{2,\eps}\;.
  \end{equation*}
  According to the classical formula for the Gaussian curvature of a
  surface in terms of an orthonormal frame, see
  e.g.~\cite[Proposition 4.40]{ABB}, the Gaussian curvature $K_\eps$ of
  the Riemannian manifold $(S,g_{\eps})$ is given by
  \begin{equation}\label{Gauss_curv}
    K_\eps=
    F_1\left(-\drift + \frac{F_1(a_\eps)}{a_\eps}\right)-
    a_\eps F_2\left(a_\eps b_1\right)-
    \left(a_\eps b_1\right)^2-
    \left(-\drift + \frac{F_1(a_\eps)}{a_\eps}\right)^2\;.
  \end{equation}
  We deduce from~(\ref{expr4ae}) that
  \begin{equation*}
    a_\eps F_2\left(a_\eps\right)
    =\frac{1}{2}F_2\left(a_\eps^2\right)
    =-\frac{\eps^2 \drift F_2(\drift)}{\left(\eps \drift^2+1\right)^2}
  \end{equation*}
  as well as
  \begin{equation*}
    F_1\left(\frac{F_1(a_\eps)}{a_\eps}\right)
    =-F_1\left(\frac{\eps \drift F_1(\drift)}{\eps \drift^2+1}\right)
    =-\frac{\eps F_1\left(\drift F_1(\drift)\right)}{\eps \drift^2+1}+
    \frac{2\eps^2\drift^2\left(F_1(\drift)\right)^2}
    {\left(\eps \drift^2+1\right)^2}\;,
  \end{equation*}
  which, in addition to~(\ref{basicbds}), implies
  \begin{equation*}
    \left|a_\eps F_2\left(a_\eps\right)\right|
    \leq\eps^2\left|\drift F_2(\drift)\right|
    \quad\mbox{and}\quad
    \left|F_1\left(\frac{F_1(a_\eps)}{a_\eps}\right)\right|
    \leq\eps\left|F_1\left(\drift F_1(\drift)\right)\right|+
    2\eps^2\drift^2\left(F_1(\drift)\right)^2\;.
  \end{equation*}
  By passing to the limit $\eps \to 0$ in~(\ref{Gauss_curv}), the
  desired expression follows.
\end{proof}
Notice that, by construction, the function $b$ and the intrinsic
Gaussian curvature $K_0$ are related by the Riccati-like equation
\begin{equation*}
  \dot b +b^{2}+K_{0}=0\;,
\end{equation*}
with the notation $\dot b=\widehat{X}_{S}(b)$.

\section{Canonical stochastic process on the embedded
  surface}
\label{sec:stochastic}
We study the stochastic process with generator $\frac{1}{2}\Delta_0$
on $S\setminus\car(S)$. After analysing the behaviour
of the drift of the 
process around non-degenerate characteristic points, we
prove Theorem~\ref{thm:alive} and Proposition~\ref{propn:hyperbolic}.

By construction, the process with generator $\frac{1}{2}\Delta_0$
moves along the characteristic foliation of $S$, that is, along the
integral curves of the vector field $\widehat{X}_S$ on
$S\setminus\car(S)$ defined in~(\ref{defn:E1}).
Around a fixed non-degenerate characteristic point $x\in\car(S)$, the
behaviour of the canonical stochastic process is determined by how
$\drift\colon S\setminus\car(S)\to \R$ given in
(\ref{defn:drift}) depends on the arc length along integral curves
emanating from $x$.
Since the vector fields $X_1,X_2$ and the Reeb vector field $X_0$ are
linearly independent everywhere, the function
$X_0u\colon S\to\R$ does not vanish near characteristic points. In
particular, we may and do choose the function $u\in C^2(M)$ defining the
surface $S$ such that $X_0u\equiv 1$ in a neighbourhood of $x$.

Understanding the expression for the horizontal Hessian $\hess u$
in~(\ref{defn:hess}) as a matrix representation in the dual frame of
$(X_1,X_2)$, and noting that the linear transformation
$J\colon D\to D$ defined in~(\ref{defn:Jcontact}) has the matrix
representation
\begin{equation*}
  J=
  \begin{pmatrix}
    0 & -1\\
    1 & 0
  \end{pmatrix}\;,
\end{equation*}
we see that
\begin{equation*}
  \left(\hess u\right)J=
  \begin{pmatrix}
    X_1X_2u & -X_1X_1u \\
    X_2X_2u & -X_2X_1u
  \end{pmatrix}\;.
\end{equation*}
The dynamics around
the characteristic point $x\in\car(S)$ is uniquely determined by the
eigenvalues
$\lambda_1$ and $\lambda_2$ of
$((\hess u)(x))J$. Since $x\in\car(S)$ is non-degenerate by assumption
both eigenvalues are non-zero, and due to $X_0u\equiv 1$ in a
neighbourhood of $x$, we further have
\begin{equation}\label{trace}
  \lambda_1+\lambda_2
  =\operatorname{Tr}\left(((\hess u)(x))J\right)
  =\left(X_1X_2u\right)(x)-\left(X_2X_1u\right)(x)
  =\left(X_0 u\right)(x) = 1\;.
\end{equation}
Thus, one of the following three cases occurs, where we use the
terminology from~\cite[Section~4.4]{robinson} to distinguish between
them. In the first case, where the eigenvalues $\lambda_1$ and
$\lambda_2$ are complex conjugate, the characteristic point $x$ is of
focus type and the
integral curves of $\widehat{X}_S$ spiral towards
the point $x$.
In the second case, where both eigenvalues are real and of positive
sign, we call $x\in\car(S)$ of node type, and all integral curves of
$\widehat{X}_S$ approaching $x$ do so tangentially to the
eigendirection corresponding to the smaller eigenvalue, with the
exception of the separatrices of the larger eigenvalue. In the third
case with the characteristic point $x$ being of saddle type, the two
eigenvalues are real but of opposite sign, and the only integral
curves of $\widehat{X}_S$ approaching $x$ are the separatrices.

Note that an elliptic characteristic point is of focus type or of node
type, whereas a hyperbolic characteristic point is of saddle
type. Depending on which of theses cases arises, we can determine how
the function $\drift$ depends on the arc
length along integral curves of $\widehat{X}_S$ emanating from
$x$.
The choice of the function $u\in C^2(M)$ such that $X_0u\equiv 1$ in a
neighbourhood of $x$ fixes the sign of the vector field
$\widehat{X}_S$. In particular, an integral curve $\gamma$ of
$\widehat{X}_S$ which extends continuously to $\gamma(0)=x$
might be defined either on the interval $[0,\delta)$ or on
$(-\delta,0]$ for some $\delta>0$. As the derivation presented below
works irrespective of the sign of the parameter of $\gamma$,
we combine the two
cases by writing $\gamma\colon I_\delta\to S$ for integral curves
of $\widehat{X}_S$ extended continuously to $\gamma(0)=x$.

The expansion around a characteristic point of focus type is a
result of the fact that the real parts of complex conjugate eigenvalues
satisfying~(\ref{trace}) equal $\frac{1}{2}$.
\begin{lemma}\label{lem:b4focus}
  Let $x\in\car(S)$ be a non-degenerate characteristic point and
  suppose that $u\in C^2(M)$ is chosen such that $X_0u\equiv 1$ in a
  neighbourhood of $x$. For $\delta>0$, let $\gamma\colon I_\delta\to S$
  be an integral curve of the vector field $\widehat{X}_S$
  extended continuously to $\gamma(0)=x$. If the
  eigenvalues of $((\hess u)(x))J$ are complex conjugate then, as
  $s\to 0$,
  \begin{equation*}
    \drift(\gamma(s))= \frac{2}{s}+O(1)\;.
  \end{equation*}
\end{lemma}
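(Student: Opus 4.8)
The plan is to reduce everything to the growth of the single function $\rho := \sqrt{(X_1 u)^2 + (X_2 u)^2}$ along $\gamma$. Since $u$ is chosen with $X_0 u\equiv 1$ near $x$, the definition \eqref{defn:drift} gives $\drift = 1/\rho$ on a punctured neighbourhood of $x$, and $\rho$ vanishes exactly at the characteristic point by \eqref{cri:char}. Hence the assertion $\drift(\gamma(s)) = \tfrac2s + O(1)$ is equivalent to $\rho(\gamma(s)) = \tfrac{s}{2} + O(s^2)$ as $s\to 0$, and it suffices to understand the rate at which $\rho$ increases when measured in the arc length $s$. Because $\gamma$ is a unit-speed integral curve of $\widehat{X}_S$, I would compute $\tfrac{d}{ds}\rho(\gamma(s)) = \widehat{X}_S\rho$, and, using $\widehat{X}_S(X_i u) = \rho^{-1}\big((X_2u)X_1X_iu - (X_1u)X_2X_iu\big)$ from \eqref{defn:E1}, rewrite it as a quadratic form in the unit vector $(X_1u,X_2u)/\rho$ paired against the horizontal Hessian $\hess u$ from \eqref{defn:hess}. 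Writing $(X_1u,X_2u) = \rho(\cos\alpha,\sin\alpha)$, this expresses $\widehat{X}_S\rho$ as an explicit trigonometric polynomial in the angle $\alpha$ with coefficients the entries of $\hess u$ along $\gamma$.

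The next step is to identify the dynamics of $\alpha$. Since $x$ is non-degenerate, $\det(\hess u)(x)\neq 0$, so the map $(X_1u,X_2u)\colon S\to\R^2$ is a local diffeomorphism near $x$, with $\rho$ the Euclidean radius in the target. Pushing $\widehat{X}_S$ forward and reparametrising by the time $t$ with $ds = \rho\,dt$, the resulting vector field is, to leading order near $x$, the linear field with matrix $A$ satisfying $\operatorname{Tr} A = (X_1X_2u - X_2X_1u)(x) = (X_0u)(x) = 1$ and $\det A = \det(\hess u)(x)$, by \eqref{trace}. Thus $A$ shares its spectrum with $((\hess u)(x))J$, and in the focus case the eigenvalues are $\tfrac12\pm i\beta$ with $\beta\neq 0$. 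Consequently the integral curves are asymptotically logarithmic spirals: as $s\to 0$ the angle $\alpha$ winds infinitely while $\rho\to 0$.

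With this picture the mechanism behind the constant $2$ is that the real parts of the eigenvalues equal $\tfrac12$. Concretely, the mean of $\widehat{X}_S\rho$ over a full turn $\alpha\in[0,2\pi)$ is $\tfrac1{2\pi}\int_0^{2\pi}\widehat{X}_S\rho\,d\alpha = \tfrac12\big((X_1X_2u)(x) - (X_2X_1u)(x)\big) = \tfrac12(X_0u)(x) = \tfrac12$, the oscillatory terms integrating to zero. Since the spiral makes infinitely many turns on every interval $(0,s]$, I would average the relation $\tfrac{d}{ds}\rho = \widehat{X}_S\rho$ over successive revolutions and use the self-similar structure of the linearised spiral to conclude first $\rho(\gamma(s))/s\to\tfrac12$ and then, upgrading the estimate, $\rho(\gamma(s)) = \tfrac{s}{2} + O(s^2)$. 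Inverting $\drift = 1/\rho$ then yields $\drift(\gamma(s)) = \tfrac2s + O(1)$.

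The main obstacle is precisely this averaging step. The instantaneous radial rate $\widehat{X}_S\rho$ oscillates with $\alpha$ and only its mean equals $\tfrac12$, so one must show that the oscillation does not survive into the leading term of $\rho(\gamma(s))$ at the precision demanded by the $O(1)$ error in $\drift$; this is where the self-similarity of the spiral near the focus, together with the control $ds = \rho\,dt$ of the arc length against the linear time, has to be exploited carefully, and it is the step most sensitive to the exact relationship between $\rho$ and $\Phi$ governing the angular speed. A second, more routine point is that $u$ is only assumed $C^2$, so the entries of $\hess u$ are merely continuous; one checks that $A$ is genuinely the linearisation, that the nonlinear corrections are $o(\rho)$, and that these corrections feed into the error term only at the claimed order.
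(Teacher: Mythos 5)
Your first two steps coincide with the paper's. With $X_0u\equiv 1$ near $x$ one has $\drift=1/\rho$ for $\rho=\sqrt{(X_1u)^2+(X_2u)^2}$, and differentiating along the unit-speed integral curve gives
\begin{equation*}
  \frac{\pt}{\pt s}\,\rho(\gamma(s))
  =\bigl(\left(\hess u\right)(\gamma(s))\bigr)
  \bigl(J(\widehat{X}_S(\gamma(s))),\widehat{X}_S(\gamma(s))\bigr)\;,
\end{equation*}
a quadratic form evaluated at the unit tangent. The divergence comes at the next step. The paper asserts that, because the eigenvalues of $((\hess u)(x))J$ are complex conjugate with real part $\frac12$ and $\widehat{X}_S$ has unit length, this form equals $\frac12+O(s)$ pointwise, and then integrates. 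You instead observe, correctly, that a quadratic form $w\mapsto w^{T}Mw$ is constant on the unit circle only when the symmetric part of $M$ is a multiple of the identity; for $M=((\hess u)(x))J$ that means $(\hess u)(x)=\alpha I+\beta J$, which is strictly stronger than the focus condition. In general the form equals $\frac12+\mu\cos(2\alpha+\phi_0)$ with $\mu\geq 0$ the norm of the traceless symmetric part of $((\hess u)(x))J$, and complex eigenvalues only force $\mu$ to be smaller than the rotation rate, not zero. So the ``main obstacle'' you flag is genuinely the crux, and it is not addressed by the paper's argument either.

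The gap is that the averaging you propose cannot deliver the stated conclusion: the oscillation survives at exactly the order you need to kill. Along the spiral the angle behaves like $\alpha(s)\approx c\ln s$, and $\int_0^s\cos(2c\ln\sigma+\phi)\dd\sigma$ equals $s(1+4c^2)^{-1/2}\cos(2c\ln s+\phi')$, which is of order $s$ and not $o(s)$; the convolution inherent in integrating along the spiral damps each oscillatory mode but does not annihilate it. Equivalently, writing $v=(X_1u,X_2u)$ and reparametrising by $\dd s=|v|\dd t$, the linearised system gives $\rho=\e^{t/2}g(t)$ with $g$ periodic and $s=\e^{t/2}\int_{-\infty}^{0}\e^{\sigma/2}g(t+\sigma)\dd\sigma$, and the identity $\rho/s\equiv\frac12$ forces $g$ to be constant, i.e.\ $\mu=0$. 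A concrete instance is $u(x,y,z)=z+\frac12(x^2+xy+y^2)$ in the Heisenberg group: then $(X_1u,X_2u)=(x,x+y)$, the origin is a focus with eigenvalues $\frac12\pm\frac{\sqrt3}{2}\im$, one computes $\frac{\pt}{\pt s}\rho=(X_2u)^2/\rho^2$ exactly, which oscillates between $0$ and $1$, and $\rho(\gamma(s))/s$ is a non-constant log-periodic function of $s$; hence $\drift(\gamma(s))-\frac2s$ is of size $1/s$, not $O(1)$. The expansion as stated does hold for the rotationally symmetric examples treated later in the paper, where $((\hess u)(x))J=\frac12 I+\beta J$. For a general focus what survives, and what the application to Theorem~\ref{thm:alive} actually needs, is the integrated estimate $\int_{s_0}^{\delta}\drift(\gamma(s))\dd s=2\ln(\delta/s_0)+O(1)$, which follows from $\int\drift\dd s=\int\dd t$ together with $t=2\ln s+O(1)$; if you rework the argument, aim for that weaker conclusion rather than the pointwise expansion. (Your closing remark about $u$ being only $C^2$ is also valid, but secondary: it degrades the error terms from $O(s)$ to $o(1)$ without changing the picture above.)
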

\begin{proof}
  Since $X_0u\equiv 1$ in a neighbourhood of $x$, we may suppose that
  $\delta>0$ is chosen small enough such that, for
  $s\in I_\delta\setminus\{0\}$,
  \begin{equation*}
    \drift(\gamma(s))=
      \frac{1}{\sqrt{\left(
            \left(X_1u\right)\left(\gamma(s)\right)\right)^2+
        \left(\left(X_2u\right)\left(\gamma(s)\right)\right)^2}}\;.
  \end{equation*}
  A direct computation shows
  \begin{equation*}
    \frac{\pt}{\pt s}\left(\drift(\gamma(s))^{-1}\right)
    =\widehat{X}_S\left(\drift(\gamma(s))^{-1}\right)
    =\left(\left(\hess u\right)\left(\gamma(s)\right)\right)
    \left(J\left(\widehat{X}_S\left(\gamma(s)\right)\right),
    \widehat{X}_S\left(\gamma(s)\right)\right)\;.
  \end{equation*}
  By the Hartman--Grobman theorem, it follows that,
  for $s\to 0$,
  \begin{equation*}
    \frac{\pt}{\pt s}\left(\drift(\gamma(s))^{-1}\right)
    =\left((\hess u)(x)\right)
    \left(J\left(\widehat{X}_S\left(\gamma(s)\right)\right),
    \widehat{X}_S\left(\gamma(s)\right)\right)+O(s)\;.
  \end{equation*}
  As complex conjugate eigenvalues of
  $((\hess u)(x))J$ have real part equal to $\frac{1}{2}$ and
  due to $\widehat{X}_S$ being a unit-length vector field, the
  previous expression simplifies to
  \begin{equation}\label{cc:taylor1}
    \frac{\pt}{\pt s}\left(\drift(\gamma(s))^{-1}\right)
    =\frac{1}{2}+O(s)\;.
  \end{equation}
  Since $(X_1u)(x)=(X_2u)(x)=0$ at the characteristic point $x$,
  we further have
  \begin{equation}\label{cc:taylor0}
    \lim_{s\to 0}\frac{1}{\drift(\gamma(s))}=0\;.
  \end{equation}
  A Taylor expansion together with~(\ref{cc:taylor1}) and (\ref{cc:taylor0})
  then implies that, as $s\to 0$,
  \begin{equation*}
    \frac{1}{\drift(\gamma(s))}
    =\frac{s}{2}+O\left(s^2\right)\;,
  \end{equation*}
  which yields, for $s\to 0$,
  \begin{equation*}
    \drift(\gamma(s))
    =\frac{2}{s}\left(1+O(s)\right)^{-1}
    =\frac{2}{s}+O(1)\;,
  \end{equation*}
  as claimed.
\end{proof}
The expansion of the function $b$ around characteristic points of node
type or of saddle type depends on along which integral curve of
$\widehat{X}_S$ we are expanding. By the discussions preceding
Lemma~\ref{lem:b4focus}, all possible behaviours are covered by the
next result.
\begin{lemma}\label{lem:b4rest}
  Fix a non-degenerate characteristic point $x\in\car(S)$.
  For $\delta>0$, let $\gamma\colon I_\delta\to S$
  be an integral curve of the vector field $\widehat{X}_S$
  which extends continuously to $\gamma(0)=x$.
  Assume $u\in C^2(M)$ is chosen such that $X_0u\equiv 1$ in a
  neighbourhood of $x$ and suppose $((\hess u)(x))J$ has real
  eigenvalues. If the curve $\gamma$ approaches $x$
  tangentially to the eigendirection corresponding to the eigenvalue
  $\lambda_i$, for $i\in\{1,2\}$, then, as $s\to 0$,
  \begin{equation*}
    \drift(\gamma(s))= \frac{1}{\lambda_i s}+O(1)\;.
  \end{equation*}
\end{lemma}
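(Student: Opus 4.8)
The plan is to follow the proof of Lemma~\ref{lem:b4focus} line by line, the only change being that the input on the real part of complex eigenvalues is replaced by the exact eigenvalue read off along the limiting eigendirection. Since $X_0u\equiv 1$ in a neighbourhood of $x$, along $\gamma$ we again have
\begin{equation*}
  \drift(\gamma(s))^{-1}=\sqrt{\left((X_1u)(\gamma(s))\right)^2+\left((X_2u)(\gamma(s))\right)^2}\;,
\end{equation*}
which tends to $0$ as $s\to 0$ because $(X_1u)(x)=(X_2u)(x)=0$, and the same direct computation as in Lemma~\ref{lem:b4focus} yields
\begin{equation*}
  \frac{\pt}{\pt s}\left(\drift(\gamma(s))^{-1}\right)
  =\left((\hess u)(\gamma(s))\right)\left(J\left(\widehat{X}_S(\gamma(s))\right),\widehat{X}_S(\gamma(s))\right)\;.
\end{equation*}
By the Hartman--Grobman theorem, together with the continuity of $\hess u$ and $\gamma(s)=x+O(s)$, this equals $((\hess u)(x))(J(\widehat{X}_S(\gamma(s))),\widehat{X}_S(\gamma(s)))+O(s)$ as $s\to 0$.

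The key step is to evaluate this quadratic form in the limit. Write $B=((\hess u)(x))J$, so that the form equals $\widehat{X}_S(\gamma(s))^{\top}B\,\widehat{X}_S(\gamma(s))$. Near $x$ the unit-length field $\widehat{X}_S$ is directed along $B^{\top}y$, where $y$ denotes the displacement from $x$ read in the frame $(X_1,X_2)$; hence the integral curves of $\widehat{X}_S$ follow the lines of the linear flow $\dot y=B^{\top}y$, whose eigenvalues are $\lambda_1$ and $\lambda_2$. If $\gamma$ approaches $x$ tangentially to the eigendirection belonging to $\lambda_i$, then $\widehat{X}_S(\gamma(s))$ converges to a unit vector $w$ with $B^{\top}w=\lambda_i w$, so that $w^{\top}Bw=\lambda_i$. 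Therefore $\frac{\pt}{\pt s}(\drift(\gamma(s))^{-1})\to\lambda_i$ as $s\to 0$, and combining this with $\drift(\gamma(s))^{-1}\to 0$ through a Taylor expansion as in Lemma~\ref{lem:b4focus} gives $\drift(\gamma(s))^{-1}=\lambda_i s\,(1+o(1))$, hence $\drift(\gamma(s))=\frac{1}{\lambda_i s}+o\!\left(\frac{1}{s}\right)$, which already carries the asserted leading term.

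The step I expect to be the main obstacle is the sharpness of the remainder. Along a separatrix---the saddle directions, and the large-eigenvalue direction of a node---the curve lies on an exact eigenline of $B^{\top}$, the tangent $\widehat{X}_S(\gamma(s))$ converges to $w$ at rate $O(s)$, and the expansion sharpens to $\drift(\gamma(s))^{-1}=\lambda_i s+O(s^2)$, which is precisely the stated $\drift(\gamma(s))=\frac{1}{\lambda_i s}+O(1)$. For a curve tangent to the slow eigendirection of a node, however, the transverse mode decays only at the fractional rate set by $\lambda_j/\lambda_i$, so upgrading $\frac{\pt}{\pt s}(\drift(\gamma(s))^{-1})=\lambda_i+o(1)$ to $\lambda_i+O(s)$---and thereby controlling the remainder at the claimed order---requires a $C^1$, Sternberg-type linearisation and a careful estimate of this transverse decay; this is where I would focus the technical effort.
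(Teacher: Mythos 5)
Your route is the same as the paper's: differentiate $\drift(\gamma(s))^{-1}=\sqrt{(X_1u)^2+(X_2u)^2}\,(\gamma(s))$ along $\gamma$ to obtain the Hessian quadratic form, evaluate it on the limiting unit tangent, and integrate using $\drift(\gamma(s))^{-1}\to 0$. Your evaluation of the form is correct and in fact slightly more careful than the paper's: the limiting tangent $w$ is an eigenvector of $B^{\top}$, the linearisation of the unnormalised characteristic field, and $w^{\top}Bw=(B^{\top}w)^{\top}w=\lambda_i$, whereas the paper simply declares $\gamma'(0)$ to be a unit eigenvector of $B=((\hess u)(x))J$; both readings give the same value.

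The obstacle you flag at the end is genuine, and the paper does not resolve it either: its proof passes to $\drift(\gamma(s))^{-1}=\lambda_i s+O(s^2)$ by ``Taylor expansion'', which presupposes that $s\mapsto\drift(\gamma(s))^{-1}$ is $C^{1,1}$ up to $s=0$, while the hypothesis of tangential approach only guarantees $C^1$ behaviour. On the separatrices of a saddle and on the strong eigendirection of a node this regularity holds and your argument already yields the stated $O(1)$. For a trajectory entering a node tangentially to the slow eigendirection, the transverse mode decays like $s^{\lambda_j/\lambda_i}$ with $\lambda_j$ the other eigenvalue, and a short computation on the hyperbolic paraboloid of Section~\ref{sec:hyperpara} with $0<a<\frac{1}{10}$, along a non-axis trajectory $y=Cx^{\lambda_2/\lambda_1}$, gives $\drift(\gamma(s))-\frac{1}{\lambda_i s}\sim c\,s^{2(\lambda_j/\lambda_i-1)-1}\to\infty$. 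So no Sternberg-type linearisation will rescue the literal $O(1)$; the correct general remainder is $O(s^{-1+\alpha})$ for some $\alpha>0$, and you should not sink effort into proving more. This weaker form is all that is needed downstream: in the proof of Theorem~\ref{thm:alive} the correction integrates to a bounded term inside the exponential defining $\rho$, so $\int_0^\delta\rho(t)\dd t=\infty$ persists, and Proposition~\ref{propn:hyperbolic} invokes the lemma only on separatrices, where your $O(1)$ is available.
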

\begin{proof}
  As in the proof of Lemma~\ref{lem:b4focus}, we obtain,
  for $\delta>0$ small enough and $s\in I_\delta\setminus\{0\}$,
  \begin{equation*}
    \widehat{X}_S\left(\drift(\gamma(s))^{-1}\right)
    =\left(\left(\hess u\right)\left(\gamma(s)\right)\right)
    \left(J\left(\widehat{X}_S\left(\gamma(s)\right)\right),
    \widehat{X}_S\left(\gamma(s)\right)\right)\;.
  \end{equation*}
  Since $\gamma$ is an integral curve of the vector field
  $\widehat{X}_S$, we deduce that
  \begin{equation*}
    \frac{\pt}{\pt s}\left(\frac{1}{\drift(\gamma(s))}\right)
    =\left(\left(\hess u\right)\left(\gamma(s)\right)\right)
    \left(J\left(\gamma'(s)\right),\gamma'(s)\right)\;.
  \end{equation*}
  By Taylor expansion, this together with~(\ref{cc:taylor0}) yields, for
  $s\to 0$,
  \begin{equation*}
    \frac{1}{\drift(\gamma(s))}
    =\left(\left(\hess u\right)(x)\right)
    \left(J\left(\gamma'(0)\right),\gamma'(0)\right)s
    +O\left(s^2\right)\;.
  \end{equation*}
  By assumption, the vector $\gamma'(0)\in T_xS$ is
  a unit-length eigenvector of $((\hess u)(x))J$
  corresponding to the eigenvalue $\lambda_i$, which has to be 
  non-zero because $x$ is a non-degenerate characteristic point.
  It follows that
  \begin{equation*}
    \left(\left(\hess u\right)(x)\right)
    \left(J\left(\gamma'(0)\right),\gamma'(0)\right)
    =\lambda_i\not= 0\;,
  \end{equation*}
  which implies, for $s\to 0$,
  \begin{equation*}
    \drift(\gamma(s))
    =\frac{1}{\lambda_i s}\left(1+O(s)\right)^{-1}
    =\frac{1}{\lambda_i s}+O(1)\;,
  \end{equation*}
  as required.
\end{proof}
\begin{remark}\label{rem:saddle}
  We stress Lemma~\ref{lem:b4rest} does not contradict the
  positivity of the function $\drift$ near the point $x$ ensured by the
  choice of $u\in C^2(M)$ such that $X_0u\equiv 1$ in neighbourhood of
  $x$. The derived expansion for $\drift$ simply implies that
  on the separatrices corresponding to the negative eigenvalue of a
  hyperbolic characteristic point, the vector field $\widehat{X}_S$
  points towards the characteristic point
  for that choice of $u$, that is, we have $s\in (-\delta,0)$. At the
  same time, we notice that
  \begin{equation*}
    \frac{\pt^2}{\pt s^2}+
    \drift\left(\gamma(s)\right)\frac{\pt}{\pt s}
  \end{equation*}
  remains invariant under a change from $s$ to $-s$. Therefore, in our
  analysis of the one-dimensional diffusion processes induced on
  integral curves of $\widehat{X}_S$, we may again assume that the
  integral curves are parameterised by a positive parameter.
\end{remark}
With the classification of
singular points for stochastic differential equations given by Cherny
and Engelbert in~\cite[Section~2.3]{engelbert}, the previous two
lemmas provide what is needed to prove
Theorem~\ref{thm:alive} and Proposition~\ref{propn:hyperbolic}. One
additional crucial observation is that for a characteristic point of
node type both eigenvalues of $\left((\hess u)(x)\right)J$ are positive
and less than one, whereas for a characteristic point of saddle type,
the positive eigenvalue is greater than one.
\begin{proof}[Proof of Theorem~\ref{thm:alive}]
  Fix an elliptic characteristic point $x\in\car(S)$. For
  $\delta>0$, let
  $\gamma\colon[0,\delta]\to S$ be an integral curve of the vector
  field $\widehat{X}_S$ extended continuously to
  $x=\lim_{s\downarrow 0}\gamma(s)$. Following Cherny and
  Engelbert~\cite[Section~2.3]{engelbert}, since the
  one-dimensional diffusion process on $\gamma$ induced by
  $\frac{1}{2}\Delta_0$ has unit diffusivity and drift
  equal to $\frac{1}{2}b$, we set
  \begin{equation}\label{defn:rho}
    \rho(t)=\exp\left(\int_t^\delta \drift(\gamma(s))\dd s\right)
    \quad\mbox{for }t\in (0,\delta]\;.
  \end{equation}
  If the characteristic point $x$ is of node type
  the real positive eigenvalues
  $\lambda_1$ and $\lambda_2$ of $((\hess u)(x))J$
  satisfy $0<\lambda_1,\lambda_2<1$ by~(\ref{trace}).
  As $x$ is of focus type or of
  node type by assumption, Lemma~\ref{lem:b4focus} and
  Lemma~\ref{lem:b4rest} establish the existence of
  some $\lambda\in\R$ with
  $0<\lambda<1$ such that, as $s\downarrow 0$,
  \begin{equation*}
    \drift(\gamma(s))
    =\frac{1}{\lambda s}+O(1)\;.
  \end{equation*}
  We deduce, for $\delta>0$ sufficiently small,
  \begin{equation*}
    \rho(t)=\exp\left(
    \int_t^\delta\left(\frac{1}{\lambda s}+
      O\left(1\right)\right)\dd s\right)
    =\exp\left(\frac{1}{\lambda}
      \ln\left(\frac{\delta}{t}\right)+O(\delta-t)\right)
    =\left(\frac{\delta}{t}\right)^{\frac{1}{\lambda}}
     \left(1+O(\delta-t)\right).
  \end{equation*}
  Due to $\frac{1}{\lambda}>1$, this implies that
  \begin{equation*}
    \int_0^\delta\rho(t)\dd t=\infty\;.
  \end{equation*}
  According to~\cite[Theorem~2.16 and Theorem~2.17]{engelbert}, it
  follows that the elliptic characteristic point $x$
  is an inaccessible boundary point for the one-dimensional diffusion
  processes induced on
  the integral curves of $\widehat{X}_S$ emanating from $x$.
  Since $x\in\car(S)$ was an arbitrary elliptic characteristic point,
  the claimed result follows.
\end{proof}
\begin{proof}[Proof of Proposition~\ref{propn:hyperbolic}]
  We consider the stochastic process
  with generator $\frac{1}{2}\Delta_0$ on $S\setminus\car(S)$
  near a hyperbolic point $x\in\car(S)$.
  Let $\gamma$ be one of the four separatrices of $x$
  parameterised by arc
  length $s\geq 0$ and such that $\gamma(0)=x$.
  Let $\lambda_1$ be the positive eigenvalue and $\lambda_2$ be the
  negative eigenvalue of $((\hess u)(x))J$. From the trace
  property~(\ref{trace}), we see that $\lambda_1>1$. By
  Lemma~\ref{lem:b4rest} and Remark~\ref{rem:saddle}, we have,
  for $i\in\{1,2\}$ and as $s\downarrow 0$,
  \begin{equation*}
    \drift(\gamma(s))
    =\frac{1}{\lambda_i s}+O(1)\;.
  \end{equation*}
  As in the previous proof, for $\delta>0$ sufficiently small and
  $\rho\colon (0,\delta]\to\R$ defined by~(\ref{defn:rho}), we have
  \begin{equation*}
    \rho(t)
    =\left(\frac{\delta}{t}\right)^{\frac{1}{\lambda_i}}
     \left(1+O(\delta-t)\right)\;.
  \end{equation*}
  However, this time, due to $\frac{1}{\lambda_i}<1$
  for $i\in\{1,2\}$, we obtain
  \begin{equation*}
    \int_0^\delta\rho(t)\dd t<\infty\;.
  \end{equation*}
  Using $\frac{1}{\lambda_1}>0$, we further compute that, on the
  separatrices corresponding to the positive eigenvalue,
  \begin{equation*}
    \int_0^\delta\frac{1+\frac{1}{2}|\drift(\gamma(t))|}{\rho(t)}\dd t
    =\int_0^\delta\frac{t^{\frac{1}{\lambda_1}-1}}
     {2\lambda_1\delta^{\frac{1}{\lambda_1}}}\left(1+O(t)\right)\dd t
    <\infty
  \end{equation*}
  and
  \begin{equation*}
    \int_0^\delta\frac{|\drift(\gamma(t))|}{2}\dd t=\infty\;.
  \end{equation*}
  On the separatrices corresponding to the negative
  eigenvalue, we have, due to $\frac{1}{\lambda_2}<0$,
  \begin{equation*}
    \int_0^\delta\frac{1+\frac{1}{2}|\drift(\gamma(t))|}{\rho(t)}\dd t
    =\int_0^\delta\frac{t^{\frac{1}{\lambda_2}-1}}
     {2\lambda_2\delta^{\frac{1}{\lambda_2}}}\left(1+O(t)\right)\dd t
    =\infty
  \end{equation*}
  as well as
  \begin{equation*}
    s(t)=\int_0^t\rho(s)\dd s
    =\frac{\lambda_2\delta^{\frac{1}{\lambda_2}}}{\lambda_2-1}
     t^{1-\frac{1}{\lambda_2}}\left(1+O(t)\right)
  \end{equation*}
  and
  \begin{equation*}
    \int_0^\delta\frac{1+\frac{1}{2}|\drift(\gamma(t))|}
    {\rho(t)}s(t)\dd t
    =\int_0^\delta
    \frac{1}{2\left(\lambda_2-1\right)}\left(1+O(t)\right)\dd t
    <\infty\;.
  \end{equation*}
  Hence, as a consequence
  of the criterions~\cite[Theorem~2.12 and Theorem~2.13]{engelbert},
  the hyperbolic
  characteristic point $x$ is reached with positive probability 
  by the one-dimensional diffusion processes induced on the
  separatrices. Thus, the canonical stochastic process started on the
  separatrices is killed in finite time with positive
  probability.
\end{proof}

\section{Stochastic processes on quadric surfaces
  in the Heisenberg group}
\label{sec:heisenberg}
Let $\Hs$ be the first Heisenberg group, that is, the Lie group
obtained by endowing $\R^3$ with the group law, expressed in Cartesian
coordinates,
\begin{equation*}
  (x_1,y_1,z_1)\ast(x_2,y_2,z_2)=
  \left(x_1+x_2,y_1+y_2,z_1+z_2+\frac{1}{2}\left(x_1y_2-x_2y_1\right)\right)\;.
\end{equation*}
On $\Hs$, we consider the two left-invariant vector fields
\begin{equation*}
  X=\frac{\pt}{\pt x}-\frac{y}{2}\frac{\pt}{\pt z}
  \qquad\mbox{and}\qquad
  Y=\frac{\pt}{\pt y}+\frac{x}{2}\frac{\pt}{\pt z}\;,
\end{equation*}
and the contact form
\begin{equation*}
  \omega=\db z - \frac{1}{2}\left(x\dd y-y\dd x\right)\;.
\end{equation*}
We note that the vector fields $X$ and $Y$ span the contact
distribution $D$ corresponding to $\omega$, that they are orthonormal
with respect to the smooth fibre inner product $g$ on $D$ given by
\begin{equation*}
  g_{(x,y,z)}=\db x\otimes\db x+\db y\otimes\db y\;,
\end{equation*}
and that
\begin{equation*}
  \db\omega|_D=-\db x\wedge \db y=-\operatorname{vol}_g\;.
\end{equation*}
Therefore, the Heisenberg group $\Hs$ understood as the
three-dimensional contact sub-Riemannian manifold $(\R^3,D,g)$ falls
into our setting, with $X_1=X$, $X_2=Y$ and the Reeb vector field
\begin{equation*}
  X_0=\frac{\pt}{\pt z}=[X_1,X_2]\;.
\end{equation*}
In Section~\ref{sec:para} and in Section~\ref{sec:sph}, we discuss
paraboloids and ellipsoids of revolution admitting one or two
characteristic points, respectively, which are elliptic and of focus
type. For these examples, the characteristic foliations can
be described by logarithmic spirals in $\R^2$ lifted to the
paraboloids and spirals between the poles on the ellipsoids, which are 
loxodromes, also called rhumb lines, on spheres.
The induced
stochastic processes are the Bessel process of order $3$
for the paraboloids and
Legendre-like processes
for the ellipsoids
moving along the leaves of the 
characteristic foliation. In Section~\ref{sec:hyperpara}, we consider
hyperbolic paraboloids where, depending on a parameter, the
unique characteristic point is either
of saddle type or of node type, and we analyse the induced
stochastic processes on the separatrices.

\subsection{Paraboloid of revolution}
\label{sec:para}
For $a\in\R$, let $S$ be the Euclidean paraboloid of revolution given
by the equation $z=a(x^2+y^2)$ for Cartesian coordinates $(x,y,z)$
in the Heisenberg group $\Hs$. This corresponds to the surface
given by~(\ref{defn:Swu}) with $u\colon\R^3\to\R$ defined as
\begin{equation*}
  u(x,y,z)=z-a\left(x^2+y^2\right)\;.
\end{equation*}
We compute
\begin{equation*}
  X_0u\equiv 1\;,\quad
  \left(X_1u\right)(x,y,z)=-2ax-\frac{y}{2}
  \quad\mbox{and}\quad
  \left(X_2u\right)(x,y,z)=-2ay+\frac{x}{2}\;,
\end{equation*}
which yields
\begin{equation}\label{eq:parasum}
  \left((X_1u)(x,y,z)\right)^2+\left((X_2u)(x,y,z)\right)^2
  =\frac{1}{4}\left(1+16a^2\right)\left(x^2+y^2\right)\;.
\end{equation}
Thus, the origin of $\R^3$ is the only characteristic point on the
paraboloid $S$. It is elliptic and of focus type because
$X_0u\equiv 1$ and
\begin{equation*}
  \left(\hess u\right)J\equiv
  \begin{pmatrix}
    \frac{1}{2} & 2a \\[0.4em]
    -2a & \frac{1}{2}
  \end{pmatrix}
\end{equation*}
has eigenvalues $\frac{1}{2}\pm 2a\im$.
On $S\setminus\car(S)$, the vector field $\widehat{X}_S$
defined by~(\ref{defn:E1}) can be expressed as
\begin{equation}\label{eq:heisenEincar}
  \widehat{X}_S=
  \frac{1}{\sqrt{\left(1+16a^2\right)\left(x^2+y^2\right)}}
  \left(\left(x-4ay\right)\frac{\pt}{\pt x}+
    \left(y+4ax\right)\frac{\pt}{\pt y}+
    2a\left(x^2+y^2\right)\frac{\pt}{\pt z}
  \right)\;.
\end{equation}
Changing to cylindrical coordinates $(r,\theta,z)$ for
$\R^3\setminus\{0\}$ with $r>0$, $\theta\in[0,2\pi)$, $z\in\R$ and
using
\begin{equation*}
  r\frac{\pt}{\pt r}=
  x\frac{\pt}{\pt x}+y\frac{\pt}{\pt y}
  \quad\mbox{as well as}\quad
  \frac{\pt}{\pt \theta}=
  -y\frac{\pt}{\pt x}+x\frac{\pt}{\pt y}\;,
\end{equation*}
the expression~(\ref{eq:heisenEincar}) for the vector field
$\widehat{X}_S$ simplifies to
\begin{equation*}
  \widehat{X}_S=
  \frac{1}{\sqrt{1+16a^2}}
  \left(\frac{\pt}{\pt r}+
    \frac{4a}{r}\frac{\pt}{\pt\theta}+2ar\frac{\pt}{\pt z}
  \right)\;.
\end{equation*}
From~(\ref{eq:parasum}), we further obtain that the function
$\drift\colon S\setminus\car(S)\to \R$
defined by~(\ref{defn:drift}) can be written as
\begin{equation*}
  b(r,\theta,z)=
  \frac{1}{\sqrt{1+16a^2}}\frac{2}{r}\;.
\end{equation*}

\paragraph{\it Characteristic foliation}
The characteristic foliation induced on the paraboloid $S$ of
revolution by the contact structure $D$ of the Heisenberg group $\Hs$ is
described through the integral curves of the vector field
$\widehat{X}_S$, cf. Figure~\ref{fig:spirals}. Its integral curves are
spirals emanating from the origin which can be indexed by
$\psi\in[0,2\pi)$ and parameterised by $s\in(0,\infty)$ as
follows 
\begin{equation}\label{log_spirals}
  s\mapsto \left(
    \frac{s}{\sqrt{1+16a^2}},
    4a\ln\left(\frac{s}{\sqrt{1+16a^2}}\right)+\psi,
    \frac{as^2}{1+16a^2}\right)\;.
\end{equation}
By construction, the vector field $\widehat{X}_S$ is a unit vector field with
respect to each metric induced on the surface $S$ from Riemannian
approximations of the Heisenberg group. In particular, it follows that
the parameter $s\in(0,\infty)$ describes the arc length along the
spirals~(\ref{log_spirals}).
\begin{figure}[h]
  \centering
  \includegraphics[height=230pt]{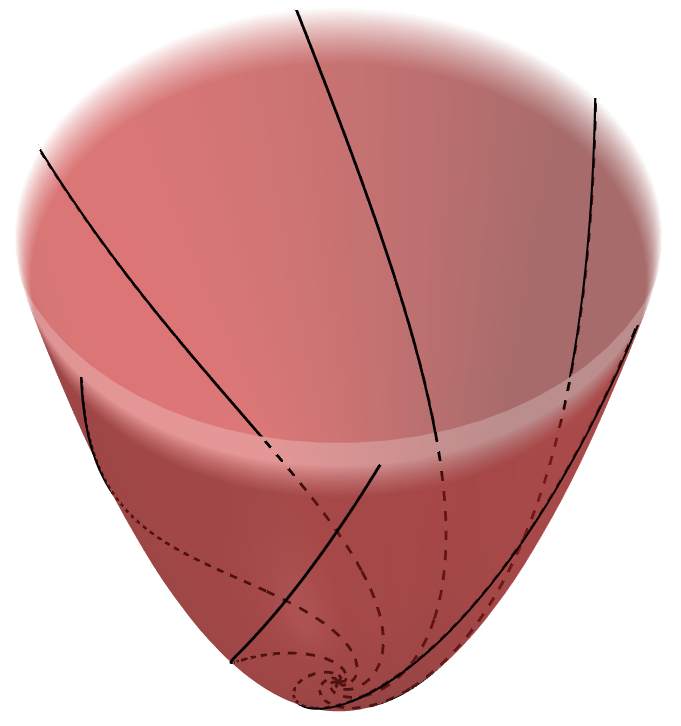}
  \caption{Characteristic foliation described by logarithmic spirals}
  \label{fig:spirals}
\end{figure}

\begin{remark}
  The spirals on $S$ defined by~(\ref{log_spirals}) are logarithmic
  spirals in $\R^2$ lifted to the paraboloid of revolution.
  In polar coordinates $(r,\theta)$ for $\R^2$, a logarithmic spiral can
  be written as
  \begin{equation}\label{log_spirals_R2}
    r=\e^{k\left(\theta+\theta_0\right)}
    \quad\mbox{for }k\in\R\setminus\{0\}
    \mbox{ and }\theta_0\in[0,2\pi)\;.
  \end{equation}
  Therefore, the spirals in~(\ref{log_spirals}) correspond to lifts of
  logarithmic spirals~(\ref{log_spirals_R2}) with $k=\frac{1}{4a}$.
  The arc length $s\in(0,\infty)$ of a logarithmic
  spiral~(\ref{log_spirals_R2}) measured from the origin satisfies
  \begin{equation*}
    s=\sqrt{1+\frac{1}{k^2}}\,r\;,
  \end{equation*}
  which for $k=\frac{1}{4a}$ yields
  $s=\sqrt{1+16a^2}\, r.$ Note that this is the same relation between
  arc length and radial distance as obtained for integral
  curves~(\ref{log_spirals}) of the vector field $\widehat{X}_S$. For
  further information on logarithmic spirals,
  see e.g. Zwikker~\cite[Chapter~16]{zwikker}.
\end{remark}
Using the spirals~(\ref{log_spirals}) which describe the characteristic
foliation on the paraboloid of revolution, we
introduce coordinates $(s,\psi)$ with $s>0$ and $\psi\in[0,2\pi)$ on the
surface $S\setminus\car(S)$. The vector field $\widehat{X}_S$ on
$S\setminus\car(S)$ and the function
$\drift\colon S\setminus\car(S)\to \R$ are then given by
\begin{equation*}
  \widehat{X}_S=\frac{\pt}{\pt s}
  \quad\mbox{and}\quad
  \drift(s,\psi)=\frac{2}{s}\;.
\end{equation*}
Thus, the canonical stochastic process induced on $S\setminus\car(S)$
has generator
\begin{equation*}
  \frac{1}{2}\Delta_0
  =\frac{1}{2}\left(\widehat{X}_S^2+\drift \widehat{X}_S\right)
  =\frac{1}{2}\frac{\pt^2}{\pt s^2}+\frac{1}{s}\frac{\pt}{\pt s}\;.
\end{equation*}
This gives rise to a Bessel process of order $3$ which out
of all the spirals~(\ref{log_spirals}) describing the characteristic
foliation on $S$ stays on the unique spiral passing through the
chosen starting point of the induced stochastic process.
In agreement with Theorem~\ref{thm:alive}, the origin is indeed
inaccessible for this stochastic process because a
Bessel process of order $3$ with positive starting point
remains positive almost surely. It arises as the radial component of a
three-dimensional Brownian
motion, and it is equal in law to a one-dimensional Brownian
motion started on the positive real line and conditioned to never hit
the origin.
We further observe that the operator $\Delta_0$ coincides with the
radial part of the Laplace--Beltrami operator for a quadratic cone,
cf.~\cite{Boscain-Neel,BP} for $\alpha=-2$, where the
self-adjointness of $\Delta_0$ is also studied.

As the limiting operator $\Delta_0$ does not depend on the parameter
$a\in\R$, the behaviour described above is also what we 
encounter on the plane $\{z=0\}$ in
the Heisenberg group $\Hs$, where the spirals~(\ref{log_spirals})
degenerate into rays emanating from the origin. We note that the
stochastic process induced by $\frac{1}{2}\Delta_0$ on the rays
differs from the singular diffusion introduced by Walsh~\cite{walsh}
on the same type of structure, but that it falls into the setting of
Chen and Fukushima~\cite{chen_onepoint}.

\subsection{Ellipsoid of revolution}
\label{sec:sph}
For $a,c\in\R$ positive,
we study the Euclidean spheroid, also called
ellipsoid of revolution, in the Heisenberg group $\Hs$ given by the
equation
\begin{equation*}
  \frac{x^2}{a^2}+\frac{y^2}{a^2}+\frac{z^2}{a^2c^2}=1
\end{equation*}
in Cartesian coordinates $(x,y,z)$.
To shorten the subsequent expressions, we choose
$u\colon \R^3\to\R$ defining the Euclidean spheroid $S$
through~(\ref{defn:Swu}) to be given by
\begin{equation*}
  u(x,y,z)=x^2+y^2+\frac{z^2}{c^2}-a^2\;.
\end{equation*}
Proceeding as in the previous example, we first obtain
\begin{equation*}
  \left(X_0u\right)(x,y,z)=\frac{2z}{c^2}
\end{equation*}
as well as
\begin{equation*}
  \left(X_1u\right)(x,y,z)
  =2x-\frac{yz}{c^2}
  \quad\mbox{and}\quad
  \left(X_2u\right)(x,y,z)
  =2y+\frac{xz}{c^2}\;,
\end{equation*}
which yields
\begin{equation}\label{ell:sqrt}
  \left((X_1u)(x,y,z)\right)^2+\left((X_2u)(x,y,z)\right)^2
  =\left(x^2+y^2\right)\left(4+\frac{z^2}{c^4}\right)\;.
\end{equation}
This implies the north pole $(0,0,ac)$ and the south pole
$(0,0,-ac)$ are the only two characteristic points on the
spheroid $S$. We further compute that
\begin{equation}\label{ell:XS}
  (X_2u)X_1-(X_1u)X_2
  =\left(2y+\frac{xz}{c^2}\right)\frac{\pt}{\pt x}
   -\left(2x-\frac{yz}{c^2}\right)\frac{\pt}{\pt y}
   -\left(x^2+y^2\right)\frac{\pt}{\pt z}\;.
\end{equation}
Using adapted
spheroidal coordinates $(\theta,\varphi)$ for
$S\setminus\car(S)$ with
$\theta\in(0,\pi)$ and $\varphi\in[0,2\pi)$, which are
related to the coordinates $(x,y,z)$ by
\begin{equation*}
  x=a\sin(\theta)\cos(\varphi)\;,\quad
  y=a\sin(\theta)\sin(\varphi)\;,\quad
  z=ac\cos(\theta)\;,
\end{equation*}
we have
\begin{align*}
  \frac{a\sin(\theta)}{c}\frac{\pt}{\pt\theta}
  &=\frac{xz}{c^2}\frac{\pt}{\pt x}
   +\frac{yz}{c^2}\frac{\pt}{\pt y}
   -\left(x^2+y^2\right)\frac{\pt}{\pt z}\quad\mbox{and}\\
  \frac{\pt}{\pt\varphi}
  &=-y\frac{\pt}{\pt x}+x\frac{\pt}{\pt y}\;.
\end{align*}
It follows that~(\ref{ell:XS}) on the surface $S\setminus\car(S)$
simplifies to
\begin{equation*}
  (X_2u)X_1-(X_1u)X_2
  =\frac{a\sin(\theta)}{c}\frac{\pt}{\pt\theta}
  -2\frac{\pt}{\pt\varphi}\;,
\end{equation*}
whereas~(\ref{ell:sqrt}) on $S\setminus\car(S)$ rewrites as
\begin{equation*}
  \left((X_1u)(\theta,\varphi)\right)^2+
  \left((X_2u)(\theta,\varphi)\right)^2
  =a^2\left(\sin(\theta)\right)^2
  \left(4+\frac{a^2\left(\cos(\theta)\right)^2}{c^2}\right)\;.
\end{equation*}
This shows that the vector field $\widehat{X}_S$ on
$S\setminus\car(S)$ defined by~(\ref{defn:E1}) is given as
\begin{equation}\label{ell:E1}
  \widehat{X}_S
  =\frac{1}{\sqrt{4c^2+a^2\left(\cos(\theta)\right)^2}}
  \left(\frac{\pt}{\pt\theta}
  -\frac{2c}{a\sin(\theta)}\frac{\pt}{\pt\varphi}\right)\;.
\end{equation}
For the function $\drift\colon S\setminus\car(S)\to \R$ defined
by~(\ref{defn:drift}), we further obtain that
\begin{equation}\label{ell:drift}
  b(\theta,\varphi)=
  \frac{2\cot(\theta)}{\sqrt{4c^2+a^2\left(\cos(\theta)\right)^2}}\;.
\end{equation}
As in the preceding example, in order to understand the canonical
stochastic process induced by the operator $\frac{1}{2}\Delta_0$ defined
through~(\ref{limitLB}), we need to express the vector field
$\widehat{X}_S$ and the function $\drift$ in terms of the arc length
along the integral curves of $\widehat{X}_S$. Since both
$\widehat{X}_S$ and $\drift$ are invariant under rotations along the
azimuthal angle $\varphi$,
this amounts to changing coordinates on the spheroid $S$ from
$(\theta,\varphi)$ to $(s,\varphi)$ where $s=s(\theta)$ is uniquely defined
by requiring that
\begin{equation*}
  \frac{\pt}{\pt s}=
  \frac{1}{\sqrt{4c^2+a^2\left(\cos(\theta)\right)^2}}
  \left(\frac{\pt}{\pt\theta}
  -\frac{2c}{a\sin(\theta)}\frac{\pt}{\pt\varphi}\right)
  \qquad\mbox{and}\qquad
  s(0)=0\;.
\end{equation*}
This corresponds to
\begin{equation}\label{dthetads}
  \frac{\db \theta}{\db s}=
  \frac{1}{\sqrt{4c^2+a^2\left(\cos(\theta)\right)^2}}\;,
\end{equation}
which together with $s(0)=0$ yields
\begin{equation*}
  s(\theta)=\int_0^\theta\sqrt{4c^2+a^2\left(\cos(\tau)\right)^2}\dd\tau
  =\int_0^\theta\sqrt{\left(4c^2+a^2\right)
    -a^2\left(\sin(\tau)\right)^2}\dd\tau
  \quad\mbox{for }\theta\in(0,\pi)\;.
\end{equation*}
Hence, the arc length $s$ along the integral curves of $\widehat{X}_S$
is given in terms of the polar angle $\theta$ as a multiple of an
elliptic integral of the second kind. Consequently, the
question if $\theta$ can be expressed explicitly in terms of $s$ is open.
However, for our analysis, it is sufficient that the map
$\theta\mapsto s(\theta)$ is invertible and that~(\ref{ell:drift}) as
well as (\ref{dthetads}) then imply
\begin{equation*}
  \drift(s,\varphi)=
  2\cot\left(\theta(s)\right)\frac{\db \theta}{\db s}\;.
\end{equation*}
Therefore, using the coordinates $(s,\varphi)$,
the operator $\frac{1}{2}\Delta_0$ on $S\setminus\car(S)$ can be
expressed as
\begin{equation*}
  \frac{1}{2}\Delta_0=\frac{1}{2}\frac{\pt^2}{\pt s^2}+
  \left(\cot\left(\theta(s)\right)\frac{\db \theta}{\db s}\right)
  \frac{\pt}{\pt s}\;,
\end{equation*}
which depends on the constants $a,c\in\R$
through~(\ref{dthetads}). Without the Jacobian factor
$\frac{\db \theta}{\db s}$ appearing in the drift term,
the canonical stochastic process induced by the operator
$\frac{1}{2}\Delta_0$ and moving along the leaves of the
characteristic foliation
would be a Legendre process, that is, a Brownian motion started inside
an interval and conditioned not to hit either endpoint of the
interval. The reason for the appearance of the additional factor
$\frac{\db \theta}{\db s}$ is that the integral curves of
$\widehat{X}_S$ connecting the two characteristic points are spirals
and not just great circles. For some further discussions on the
characteristic foliation of the spheroid, see the subsequent
Remark~\ref{rem:loxodromes}.

The emergence of an operator
which is almost the generator of
a Legendre process moving along the leaves of the
characteristic foliation motivates the 
search for a surface in a three-dimensional contact sub-Riemannian
manifold where we do exhibit a Legendre process moving along the
leaves of the
characteristic foliation induced by the contact structure. This is
achieved in Section~\ref{sec:SU}.

\begin{remark}
  The northern hemisphere of the spheroid could equally be defined by the
  function 
  \begin{equation*}
    u(x,y,z)=z-c\sqrt{a^2-x^2-y^2}\;.
  \end{equation*}
  With this choice we have $X_0u\equiv 1$. We further obtain
  \begin{equation*}
    \left(\left(\hess u\right)(0,0,ac)\right)J=
    \begin{pmatrix}
      \frac{1}{2} & -\frac{c}{a}\\[0.4em]
      \frac{c}{a} & \frac{1}{2}
    \end{pmatrix}\;,
  \end{equation*}
  whose eigenvalues are $\frac{1}{2}\pm\frac{c}{a}\im$. A similar
  computation on the southern hemisphere implies that both
  characteristic points are elliptic and of focus type. Thus, by
  Theorem~\ref{thm:alive}, the stochastic process with generator
  $\frac{1}{2}\Delta_0$ hits neither the north pole nor the south pole,
  and it induces a one-dimensional process on the unique leaf of
  the characteristic foliation picked out by the starting point.
\end{remark}
\begin{remark}\label{rem:loxodromes}
  With respect to the Euclidean
  metric $\langle\cdot,\cdot\rangle$ on $\R^3$, we have for the adapted
  spheroidal coordinates $(\theta,\varphi)$ of $S\setminus\car(S)$ as
  above that
  \begin{equation*}
    \left\langle\frac{\pt}{\pt\theta},
      \frac{\pt}{\pt\theta}\right\rangle
    =a^2\left(\cos(\theta)\right)^2 +
    a^2c^2\left(\sin(\theta)\right)^2
    \quad\mbox{and}\quad
    \left\langle\frac{\pt}{\pt\varphi},
      \frac{\pt}{\pt\varphi}\right\rangle
    =a^2\left(\sin(\theta)\right)^2\;.
  \end{equation*}
  It follows that the angle $\alpha$ formed by the
  vector field $\widehat{X}_S$ given in~(\ref{ell:E1}) and the azimuthal
  direction satisfies
  \begin{equation*}
    \cos\left(\alpha(\theta,\varphi)\right)=
    -\frac{2c}{\sqrt{a^2\left(\cos(\theta)\right)^2 +
        a^2c^2\left(\sin(\theta)\right)^2+4c^2}}\;.
  \end{equation*}
  Notably, on spheres, that is, if $c=1$, the angle $\alpha$ is
  constant everywhere. Hence, the integral curves of $\widehat{X}_S$
  considered as Euclidean curves on an
  Euclidean sphere are loxodromes, cf. Figure~\ref{fig:loxo},
  which are also called rhumb lines.
  They are related to logarithmic spirals through stereographic
  projection. Loxodromes arise in navigation by following a path with
  constant bearing measured with respect to the north pole or the
  south pole, see Carlton-Wippern~\cite{loxodrome}.
  \begin{figure}[h]
    \centering
    \includegraphics[height=220pt]{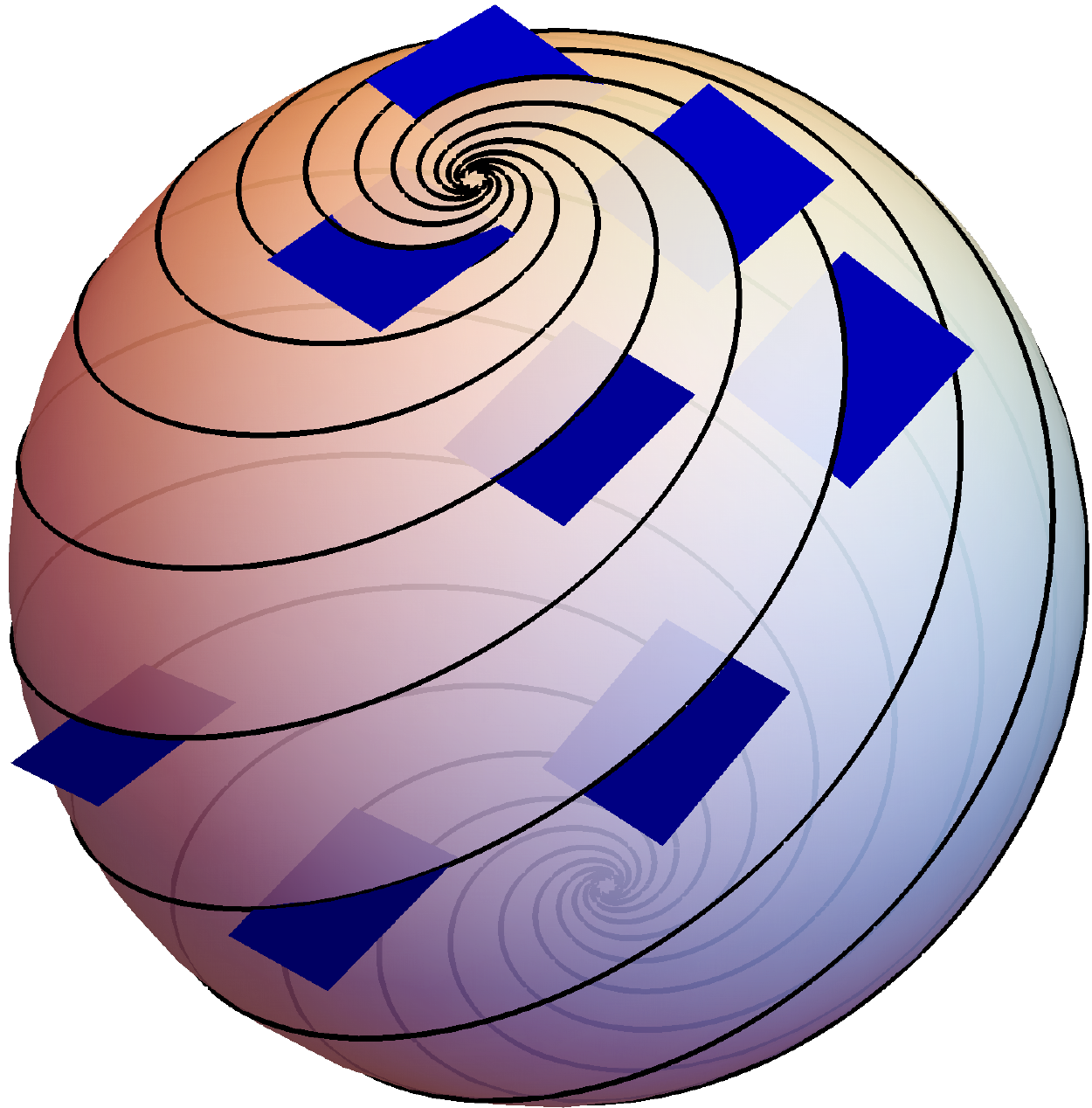}
    \caption{Characteristic foliation on spheres described by
      loxodromes}
    \label{fig:loxo}
  \end{figure}
\end{remark}

\subsection{Hyperbolic paraboloid}
\label{sec:hyperpara}
For $a\in\R$ positive and such that $a\not=\frac{1}{2}$, we consider
the Euclidean hyperbolic paraboloid $S$ in the Heisenberg group $\Hs$
given by~(\ref{defn:Swu}) with $u\colon\R^3\to\R$ defined as
\begin{equation*}
  u(x,y,z)=z-axy\;,
\end{equation*}
for Cartesian coordinates $(x,y,z)$. We compute
\begin{equation}\label{hyperpara:com}
  X_0u\equiv 1\;,\quad
  \left(X_1u\right)(x,y,z)=-ay-\frac{y}{2}
  \quad\mbox{as well as}\quad
  \left(X_2u\right)(x,y,z)=-ax+\frac{x}{2}\;,
\end{equation}
and further that
\begin{equation}\label{hyper:Hess}
  \left(\hess u\right)J\equiv
  \begin{pmatrix}
    \frac{1}{2}-a & 0 \\[0.4em]
    0 & \frac{1}{2}+a
  \end{pmatrix}\;.
\end{equation}
Due to
\begin{equation*}
  \left((X_1u)(x,y,z)\right)^2+\left((X_2u)(x,y,z)\right)^2
  =\left(\frac{1}{2}-a\right)^2x^2+
  \left(\frac{1}{2}+a\right)^2y^2\;,
\end{equation*}
the hyperbolic paraboloid $S$ has the origin of $\R^3$ as its unique
characteristic point. By~(\ref{hyper:Hess}), this characteristic
point is elliptic and of node type if $0 < a < \frac{1}{2}$, and
hyperbolic and therefore of saddle type if $a>\frac{1}{2}$. The reason
for having excluded the case $a=\frac{1}{2}$ right from the beginning is
that it gives rise to a line of degenerate characteristic points.

We note that the $x$-axis and the $y$-axis lie in the hyperbolic
paraboloid $S$. From~(\ref{hyperpara:com}), we see that the positive
and negative $x$-axis as well as the positive and negative
$y$-axis are integral curves of the vector field
$\widehat{X}_S$ on $S\setminus\car(S)$.
In the following, we restrict our attention to studying the behaviour
of the canonical stochastic process on these integral curves, which
nevertheless nicely illustrates Theorem~\ref{thm:alive} and
Proposition~\ref{propn:hyperbolic}.

We start by analysing the
one-dimensional diffusion process induced on the positive $y$-axis
$\gamma_y^+$, which by symmetry is equal in law to the
process induced on the negative $y$-axis. For all positive $a\in\R$
with $a\not=\frac{1}{2}$, we have
\begin{equation*}
  \widehat{X}_S|_{\gamma_y^+}=\frac{\pt}{\pt y}\;,
\end{equation*}
implying that the arc length $s>0$ along $\gamma_y^+$ is given by
$s=y$. This yields, for all $s>0$,
\begin{equation*}
  b\left(\gamma_y^+(s)\right)
  =\frac{1}{\left(\frac{1}{2}+a\right)s}\;.
\end{equation*}
Thus, the one-dimensional diffusion process on $\gamma_y^+$ induced by
$\frac{1}{2}\Delta_0$ has generator
\begin{equation*}
  \frac{1}{2}\frac{\pt^2}{\pt s^2}+
  \frac{1}{\left(1+2a\right)s}\frac{\pt}{\pt s}\;,
\end{equation*}
which gives rise to a Bessel process of order
$1+\frac{2}{1+2a}$. If started at a point with positive value this
diffusion process stays positive for all times
almost surely if $1+\frac{2}{1+2a}>2$
whereas it hits the origin with positive probability if
$1+\frac{2}{1+2a}<2$. This is consistent with Theorem~\ref{thm:alive}
and Proposition~\ref{propn:hyperbolic} because
for $a>\frac{1}{2}$ the positive
$y$-axis is a separatrix for the hyperbolic characteristic
point at the origin and
\begin{equation*}
  2<1+\frac{2}{1+2a}
  \quad\mbox{if }0<a<\frac{1}{2}
  \qquad\mbox{as well as}\qquad
  2>1+\frac{2}{1+2a}
  \quad\mbox{if }a>\frac{1}{2}\;.
\end{equation*}
Some more care is needed when studying the diffusion process induced on
the positive $x$-axis $\gamma_x^+$. As before, this process is
equal in law to the process induced on the negative $x$-axis.
We obtain
\begin{equation*}
  \widehat{X}_S|_{\gamma_x^+}=
  \begin{cases}
    \phantom{-}\frac{\pt}{\pt x} & \mbox{if } 0<a<\frac{1}{2}\\[0.4em]
    -\frac{\pt}{\pt x} & \mbox{if } a>\frac{1}{2}
  \end{cases}
\end{equation*}
as well as, for $x>0$,
\begin{equation*}
  b(x,0,0)=
  \begin{cases}
    \phantom{-}\dfrac{1}{\left(\frac{1}{2}-a\right)x}
    & \mbox{if } 0<a<\frac{1}{2}\\[1em]
    -\dfrac{1}{\left(\frac{1}{2}-a\right)x} & \mbox{if } a>\frac{1}{2}
  \end{cases}\;.
\end{equation*}
It follows that the one-dimensional diffusion process on $\gamma_x^+$
induced by $\frac{1}{2}\Delta_0$ has generator
\begin{equation*}
  \frac{1}{2}\frac{\pt^2}{\pt x^2}+
  \frac{1}{\left(1-2a\right)x}\frac{\pt}{\pt x}\;.
\end{equation*}
This yields a Bessel process of order $1+\frac{2}{1-2a}$. In
agreement with Theorem~\ref{thm:alive} and
Proposition~\ref{propn:hyperbolic},
if started at a point with positive value this process
never reaches the origin if $0<a<\frac{1}{2}$ which ensures
$1+\frac{2}{1-2a}>3$, whereas the process reaches the origin with
positive probability if $a>\frac{1}{2}$ as this corresponds to
$1+\frac{2}{1-2a}<1$.

\section{Stochastic processes on canonical surfaces in
  \texorpdfstring{$\SU$}{SU(2)} and
  \texorpdfstring{$\SL$}{SL(2,R)}}
\label{sec:model}
In Section~\ref{sec:para}, we establish that for a
paraboloid of revolution embedded in the Heisenberg group $\Hs$, the
operator $\frac{1}{2}\Delta_0$ induces a Bessel process of order $3$
moving along the leaves of the characteristic foliation, which is
described by lifts
of logarithmic spirals emanating from the origin. As discussed in
Revuz and Yor~\cite[Chapter~VIII.3]{revuz}, the Legendre processes and
the hyperbolic Bessel processes arise from the same type of Girsanov
transformation as the Bessel process, where these three cases only
differ by the sign of a parameter. We further recall that in
Section~\ref{sec:sph} we encounter a canonical stochastic process
which is
almost a Legendre process moving along the leaves of the
characteristic foliation 
induced on a spheroid in the Heisenberg group $\Hs$.
This motivates the search for surfaces in
three-dimensional contact sub-Riemannian manifolds where the canonical
stochastic process is a Legendre process of order $3$ or a hyperbolic Bessel
process of order $3$
moving along the leaves of the characteristic foliation.

We consider surfaces in the Lie groups $\SU$ and $\SL$ endowed
with standard sub-Riemannian structures. Together with the
Heisenberg group, these sub-Riemannian geometries play the role
of model spaces for three-dimensional contact sub-Riemannian
manifolds. In the first two subsections,
we find, by explicit computations, the canonical stochastic processes
induced on certain surfaces in these groups, when expressed in
convenient coordinates. The last
subsection proposes a unified geometric description, justifying the
choice of our surfaces.

\subsection{Special unitary group
\texorpdfstring{$\SU$}{SU(2)}}
\label{sec:SU}
One obstruction to recovering Legendre processes moving along
the characteristic foliation in Section~\ref{sec:sph} is that the
characteristic foliation of a spheroid in the Heisenberg group is
described by spirals connecting the north pole and the south pole
instead of great circles. This is the reason for
considering $S^2$ as a surface embedded in $\SU\simeq S^3$ understood
as a contact sub-Riemannian manifold because this gives rise to a
characteristic foliation on $S^2$ described by great circles.

The special unitary group $\SU$ is the Lie group of
$2\times 2$ unitary matrices of determinant $1$, that is,
\begin{equation*}
  \SU=\left\{
    \begin{pmatrix}
      \phantom{-}z+w\im  & y+x\im\\
      -y+x\im & z-w\im
    \end{pmatrix}
    \colon
    x,y,z,w\in\R\mbox{ with }x^2+y^2+z^2+w^2=1
  \right\}\;,
\end{equation*}
with the group operation being given by matrix multiplication.
Using the Pauli matrices
\begin{equation*}
  \sigma_1=
  \begin{pmatrix}
    0 & 1 \\ 1 & 0
  \end{pmatrix}\;,\quad
  \sigma_2=
  \begin{pmatrix}
    0 & -\im \\ \im & 0
  \end{pmatrix}
  \quad\mbox{and}\quad
  \sigma_3=
  \begin{pmatrix}
    1 & 0 \\ 0 & -1
  \end{pmatrix}\;,
\end{equation*}
we identify $\SU$
with the unit quaternions, and hence also with $S^3$, via the map
\begin{equation*}
  \begin{pmatrix}
      \phantom{-}z+w\im  & y+x\im\\
      -y+x\im & z-w\im
    \end{pmatrix}
    \mapsto z I_2+x\im\sigma_1+y\im\sigma_2+w\im\sigma_3\;.
\end{equation*}
The Lie algebra $\mathfrak{su}(2)$ of $\SU$ is the algebra formed by
the $2\times 2$ skew-Hermitian matrices with trace zero. A basis for
$\mathfrak{su}(2)$ is
$\{\frac{\im\sigma_1}{2},\frac{\im\sigma_2}{2},\frac{\im\sigma_3}{2}\}$
and the corresponding left-invariant vector fields on the Lie group
$\SU$ are
\begin{align*}
  U_1&=\frac{1}{2}\left(
    -x\frac{\pt}{\pt z} + z\frac{\pt}{\pt x}
    -w\frac{\pt}{\pt y} + y\frac{\pt}{\pt w}
  \right)\;,\\
  U_2&=\frac{1}{2}\left(
    -y\frac{\pt}{\pt z} + w\frac{\pt}{\pt x}
    +z\frac{\pt}{\pt y} - x\frac{\pt}{\pt w}
  \right)\;,\\
  U_3&=\frac{1}{2}\left(
    -w\frac{\pt}{\pt z} - y\frac{\pt}{\pt x}
    +x\frac{\pt}{\pt y} + z\frac{\pt}{\pt w}
  \right)\;,
\end{align*}
which satisfy the commutation
relations $[U_1,U_2]=-U_3$, $[U_2,U_3]=-U_1$ and
$[U_3,U_1]=-U_2$. Thus, any two of these three left-invariant
vector fields give rise to a sub-Riemannian structure on $\SU$.
To streamline the subsequent computations, we
choose $k\in\R$ with $k>0$ and
equip $\SU$ with the sub-Riemannian structure obtained by setting
$X_1=2kU_1$, $X_2=2kU_2$ and by requiring $(X_1,X_2)$ to be
an orthonormal frame for the distribution $D$ spanned by
the vector fields $X_1$ and
$X_2$. The appropriately normalised contact form $\omega$ for the
contact distribution $D$ is
\begin{equation*}
  \omega= \frac{1}{2k^2}
  \left(w\dd z + y\dd x - x\dd y - z\dd w\right)
\end{equation*}
and the associated Reeb vector field $X_0$ satisfies
\begin{equation*}
  X_0=[X_1,X_2]=-4k^2U_3
  =2k^2\left(
    w\frac{\pt}{\pt z} + y\frac{\pt}{\pt x}
    -x\frac{\pt}{\pt y} - z\frac{\pt}{\pt w}
  \right)\;.
\end{equation*}
In $\SU$, we consider the surface $S$ given by the function
$u\colon\SU\to\R$ defined by
\begin{equation*}
  u(x,y,z,w)=w\;.
\end{equation*}
The surface $S$ is isomorphic to $S^2$ because
\begin{equation*}
  S=\left\{
    \begin{pmatrix}
      z & y+x\im\\
      -y+x\im & z
    \end{pmatrix}
    \colon
    x,y,z\in\R\mbox{ with }x^2+y^2+z^2=1
  \right\}\;.
\end{equation*}
We compute
\begin{equation*}
  (X_0u)(x,y,z,w)=-2k^2z\;,\quad
  (X_1u)(x,y,z,w)=ky
  \quad\mbox{and}\quad
  (X_2u)(x,y,z,w)=-kx\;,
\end{equation*}
which yields
\begin{equation*}
  \left((X_1u)(x,y,z,w)\right)^2+
  \left((X_2u)(x,y,z,w)\right)^2
  =k^2\left(x^2+y^2\right)\;.
\end{equation*}
Due to $x^2+y^2+z^2=1$, it follows that a point on $S$ is
characteristic if and only if $z=\pm 1$.
Thus, the characteristic
points on $S$ are the north pole $(0,0,1)$ and the south pole
$(0,0,-1)$. The vector field $\widehat{X}_S$
on $S\setminus\car(S)$ defined by~(\ref{defn:E1}) is given as
\begin{equation}\label{SU:E1}
  \widehat{X}_S=\frac{k}{\sqrt{x^2+y^2}}\left(
    \left(x^2+y^2\right)\frac{\pt}{\pt z}
    -xz\frac{\pt}{\pt x}-yz\frac{\pt}{\pt y}\right)\;,
\end{equation}
and for the function $\drift\colon S\setminus\car(S)\to \R$ defined
by~(\ref{defn:drift}), we obtain
\begin{equation}\label{SU:drift}
  \drift(x,y,z)=-\frac{2kz}{\sqrt{x^2+y^2}}\;.
\end{equation}
We now change coordinates for $S\setminus\car(S)$ from $(x,y,z)$ with
$x^2+y^2+z^2=1$ and $z\not=\pm 1$ to $(\theta,\varphi)$
with $\theta\in(0,\frac{\pi}{k})$ and $\varphi\in[0,2\pi)$ by
\begin{equation*}
  x=\sin(k\theta)\cos(\varphi)\;,\quad
  y=\sin(k\theta)\sin(\varphi)\quad\mbox{and}\quad
  z=\cos(k\theta)\;.
\end{equation*}
We note that
\begin{equation*}
  \frac{\pt}{\pt\theta}
  =k\cos(k\theta)\cos(\varphi)\frac{\pt}{\pt x} +
   k\cos(k\theta)\sin(\varphi)\frac{\pt}{\pt y} -
   k\sin(k\theta)\frac{\pt}{\pt z}
\end{equation*}
as well as
\begin{equation*}
  xz=\sin(k\theta)\cos(k\theta)\cos(\varphi)\;,\quad
  yz=\sin(k\theta)\cos(k\theta)\sin(\varphi)
  \quad\mbox{and}\quad
  \sqrt{x^2+y^2}=\sin(k\theta)\;.
\end{equation*}
This together with~(\ref{SU:E1}) and~(\ref{SU:drift})
implies that
\begin{equation*}
  \widehat{X}_S=-\frac{\pt}{\pt\theta}
  \quad\mbox{and}\quad
  \drift(\theta,\varphi)=-2k\cot(k\theta)\;.
\end{equation*}
We deduce that the integral curves of $\widehat{X}_S$ are great
circles on $S$ and that
\begin{equation*}
  \frac{1}{2}\Delta_0=
  \frac{1}{2}\frac{\pt^2}{\pt \theta^2}+
  k\cot(k\theta)\frac{\pt}{\pt \theta}\;,
\end{equation*}
which indeed, on each great circle, induces
a Legendre process of order $3$ on the interval
$(0,\frac{\pi}{k})$. These processes first appeared in
Knight~\cite{knight} as so-called taboo processes and are obtained by
conditioning Brownian motion started inside the interval
$(0,\frac{\pi}{k})$ to never hit either of the two boundary points,
see Bougerol and Defosseux~\cite[Section~5.1]{legendre1}.
As discussed
in It\^{o} and McKean~\cite[Section~7.15]{legendre2},
they also arise as the latitude of a Brownian motion on the 
three-dimensional sphere of radius $\frac{1}{k}$.

\subsection{Special linear group
\texorpdfstring{$\SL$}{SL(2,R)}}
\label{sec:SL}
The appearance of the Bessel process on
the plane $\{z=0\}$ in the Heisenberg 
group $\Hs$ and of the Legendre processes on a compactified plane in
$\SU$ understood as a contact sub-Riemannian manifold suggests that the
hyperbolic Bessel processes arise on planes in the special linear
group $\SL$ equipped with a sub-Riemannian structure.
This is indeed the case if we consider
the standard sub-Riemannian structures on
$\SL$ where the flow of the Reeb vector field preserves the
distribution and the fibre inner product.

The special linear group $\SL$ of degree two over the field $\R$ is
the Lie group of $2\times 2$ matrices with determinant $1$, that is,
\begin{equation*}
  \SL=\left\{
    \begin{pmatrix}
      x & y\\
      z & w
    \end{pmatrix}
    \colon
    x,y,z,w\in\R\mbox{ with }xw-yz=1
  \right\}\;,
\end{equation*}
where the group operation is taken to be matrix multiplication.
The Lie algebra $\mathfrak{sl}(2,\R)$ of $\SL$ is the algebra of
traceless  $2\times 2$ real matrices. A basis of
$\mathfrak{sl}(2,\R)$ is formed by the three matrices
\begin{equation*}
  p=\frac{1}{2}
  \begin{pmatrix}
    1 & 0 \\ 0 & -1
  \end{pmatrix}\;,\quad
  q=\frac{1}{2}
  \begin{pmatrix}
    0 & 1 \\ 1 & 0
  \end{pmatrix}
  \quad\mbox{and}\quad
  j=\frac{1}{2}
  \begin{pmatrix}
    0 & 1 \\ -1 & 0
  \end{pmatrix}\;,
\end{equation*}
whose corresponding left-invariant vector fields on $\SL$ are
\begin{align*}
  X&=\frac{1}{2}\left(
     x\frac{\pt}{\pt x} - y\frac{\pt}{\pt y}
    +z\frac{\pt}{\pt z} - w\frac{\pt}{\pt w}
  \right)\;,\\
  Y&=\frac{1}{2}\left(
     y\frac{\pt}{\pt x} + x\frac{\pt}{\pt y}
    +w\frac{\pt}{\pt z} + z\frac{\pt}{\pt w}
  \right)\;,\\
  K&=\frac{1}{2}\left(
    -y\frac{\pt}{\pt x} + x\frac{\pt}{\pt y}
    -w\frac{\pt}{\pt z} + z\frac{\pt}{\pt w}
  \right)\;.
\end{align*}
These vector fields satisfy the commutation relations
$[X,Y]=K$, $[X,K]=Y$ and $[Y,K]=-X$. For
$k\in\R$ with $k>0$, we
equip $\SL$ with the sub-Riemannian structure
obtain by considering the distribution $D$
spanned by $X_1=2kX$ and $X_2=2kY$ as well as the fibre inner product 
uniquely given by requiring $(X_1,X_2)$ to be a global
orthonormal frame. The appropriately normalised contact form
corresponding to this choice is
\begin{equation*}
  \omega=\frac{1}{4k^2}
  \left(z\dd x + w\dd y - x\dd z - y\dd w\right)\;,
\end{equation*}
and the Reeb vector field $X_0$ associated with the contact form
$\omega$ satisfies
\begin{equation*}
  X_0=[X_1,X_2]=4k^2K
  =2k^2\left(
    -y\frac{\pt}{\pt x} + x\frac{\pt}{\pt y}
    -w\frac{\pt}{\pt z} + z\frac{\pt}{\pt w}
  \right)\;.
\end{equation*}
The plane in $\SL$ passing
tangentially to the contact distribution through the identity
element
is the surface $S$ given as~(\ref{defn:Swu}) by the function
$u\colon\SL\to\R$ defined by
\begin{equation*}
  u(x,y,z,w)=y-z\;.
\end{equation*}
Observe that, on $S$, we have the relation $xw=1+y^2\geq 1$. Therefore, if a
point $(x,y,z,w)$ lies on the surface $S$ then so does the point
$(-x,y,z,-w)$, and neither $x$ nor $w$ can vanish on $S$. Thus,
the function $u\colon\SL\to\R$ induces a surface consisting of two
sheets. By symmetry, we restrict our attention to the sheet
containing the $2\times 2$ identity matrix, henceforth referred to as the
upper sheet. We compute
\begin{equation*}
  (X_1u)(x,y,z,w)=-k\left(y+z\right)
  \quad\mbox{and}\quad
  (X_2u)(x,y,z,w)=k\left(x-w\right)\;,
\end{equation*}
as well as
\begin{equation*}
  (X_0u)(x,y,z,w)=2k^2\left(x+w\right)\;.
\end{equation*}
We note that
\begin{equation*}
  \left((X_1u)(x,y,z,w)\right)^2+
  \left((X_2u)(x,y,z,w)\right)^2
  =k^2\left(y+z\right)^2+k^2\left(x-w\right)^2
\end{equation*}
vanishes on $S$ if and only if $y=z=0$ and $x=w$. From
$xw=1+y^2$, it follows that the surface $S$ admits the two
characteristic points $(1,0,0,1)$ and $(-1,0,0,-1)$, that is, one
unique characteristic point on each sheet.
Following Rogers and Williams~\cite[Section~V.36]{volume2}, we choose
coordinates
$(r,\theta)$ with $r>0$ and $\theta\in[0,2\pi)$ on the upper sheet of
$S\setminus\car(S)$ such that
\begin{align*}
  x&=\cosh\left(kr\right)+\sinh\left(kr\right)\cos(\theta)\;,\\
  w&=\cosh\left(kr\right)-\sinh\left(kr\right)\cos(\theta)\;,
     \quad\mbox{and}\\
  y&=\sinh\left(kr\right)\sin(\theta)\;.
\end{align*}
On the upper sheet of $S\setminus\car(S)$, we obtain
\begin{equation*}
  (X_1u)(r,\theta)=-2k\sinh\left(kr\right)\sin(\theta)
  \quad\mbox{and}\quad
  (X_2u)(r,\theta)=2k\sinh\left(kr\right)\cos(\theta)\;,
\end{equation*}
which yields
\begin{equation*}
  \sqrt{\left((X_1u)(r,\theta)\right)^2+\left((X_2u)(r,\theta)\right)^2}
  =2k\sinh\left(kr\right)\;,
\end{equation*}
as well as
\begin{equation*}
  (X_0u)(r,\theta)=4k^2\cosh\left(kr\right)\;.
\end{equation*}
A direct computation shows that on the upper sheet of
$S\setminus\car(S)$, we have
\begin{equation*}
  \widehat{X}_S=\frac{\pt}{\pt r}
  \quad\mbox{and}\quad
  \drift(r,\theta)=2k\coth\left(kr\right)\;,
\end{equation*}
which implies that
\begin{equation*}
  \frac{1}{2}\Delta_0=
  \frac{1}{2}\frac{\pt^2}{\pt r^2}+
  k\coth\left(kr\right)\frac{\pt}{\pt r}\;.
\end{equation*}
Hence, we recover all hyperbolic Bessel processes of order $3$ as
the canonical stochastic processes moving along the leaves of the
characteristic foliation of the upper sheet of
$S\setminus\car(S)$, and similarly on its lower sheet.
For further discussions on hyperbolic Bessel processes, see
Borodin~\cite{borodin}, Gruet~\cite{gruet00}, Jakubowski
and Wi\'{s}niewolski~\cite{jakubowski}, and Revuz and
Yor~\cite[Exercise~3.19]{revuz}.
As for
the Bessel process of order $3$ and the Legendre processes of order $3$, the
hyperbolic Bessel processes of order $3$ can be defined as the radial
component of Brownian motion on three-dimensional hyperbolic spaces.

\subsection{A unified viewpoint}
\label{s:unif}
The surfaces considered in the last two examples together with the
plane $\{z=0\}$ in the Heisenberg group are particular cases of the
following construction.

Let $G$ be a three-dimensional Lie group endowed with a contact
sub-Riemannian structure whose distribution $D$ is spanned by two
left-invariant vector fields $X_{1}$ and $X_{2}$ which are orthonormal
for the fibre inner product $g$ defined on $D$. Assume that the
commutation relations between
$X_{1},X_{2}$ and the Reeb vector field $X_0$ are given by, for some
$\kappa\in\R$,
\begin{equation*}
  [X_{1},X_{2}]=X_{0}\;,\quad
  [X_{0},X_{1}]=\kappa X_{2}\;, \quad
  [X_{0},X_{2}]=-\kappa X_{1}\;.
\end{equation*}
Under these assumptions the flow of the Reeb vector field $X_{0}$
preserves not only the 
distribution, namely $\e^{tX_{0}}_{*}D=D$, but also the fibre inner
product $g$. The examples presented in Section~\ref{sec:para} and in
Sections~\ref{sec:SU} and \ref{sec:SL}
satisfy the above commutation relations with
$\kappa=0$ in the Heisenberg group, and for a parameter
$k>0$, with $\kappa=4k^{2}$ in $\SU$ and $\kappa=-4k^{2}$
in $\SL$. These are the three classes of model spaces for three-dimensional
sub-Riemannian structures on Lie groups with respect to local
sub-Riemannian isometries, see for instance \cite[Chapter 17]{ABB} and
\cite{AB3D} for more details. 

In each of the examples concerned, the surface $S$ that we consider can be
parameterised as
\begin{align*}
  S&=\left\{\exp(x_{1}X_{1}+x_{2}X_{2}) :  x_{1},x_{2}\in
     \mathbb{R}\right\}\\
   &=\left\{\exp(r\cos \theta X_{1}+r\sin \theta X_{2}) :  r\geq 0,
     \theta\in [0,2\pi)\right\}\;.
\end{align*}
Observe that $S$ is automatically smooth, connected, and contains the
origin of the group.
Under these assumptions, the sub-Riemannian structure is of type
$\mathbf{d}\oplus\mathbf{s}$ in the sense
of~\cite[Section 7.7.1]{ABB}, and for $\theta$ fixed, the
curve $r\mapsto \exp(r\cos \theta X_{1}+r\sin \theta X_{2})$ is a
geodesic parameterised by length. Hence, $r\geq 0$ is the arc length parameter
along the corresponding trajectory. It follows that the surface $S$ is
ruled by geodesics, each of them having vertical component of the
initial covector equal to zero. We refer to \cite[Chapter 7]{ABB} for
more details on explicit expressions for sub-Riemannian geodesics in
these cases, see also \cite{BR3D}.

\bibliographystyle{plain}
\bibliography{references}

\end{document}